\documentclass[12pt]{amsart}

 \usepackage{amsfonts,graphics,amsmath,amsthm,amsfonts,amscd,amssymb,amsmath,latexsym,multicol,
 mathrsfs}
\usepackage{epsfig,url}
\usepackage{flafter}
\usepackage{fancyhdr}
\usepackage{hyperref}
\hypersetup{colorlinks=true, linkcolor=black}

\addtolength{\oddsidemargin}{-0.3in}
\addtolength{\evensidemargin}{-0.3in}
\addtolength{\textwidth}{0.6in}

\addtolength{\topmargin}{-0.4in}
\addtolength{\textheight}{0.7in}


\makeatletter

\def\jobis#1{FF\fi
  \def\predicate{#1}%
  \edef\predicate{\expandafter\strip@prefix\meaning\predicate}%
  \edef\job{\jobname}%
  \ifx\job\predicate
}

\makeatother

\if\jobis{proposal}%
\else
\fi

 \usepackage[matrix, arrow]{xy}
 
\DeclareMathOperator{\Bs}{Bs}

\DeclareMathOperator{\Div}{Div}


 \numberwithin{equation}{subsection}
 \numberwithin{footnote}{subsection}

 \newtheorem{lem}[subsection]{Lemma}
 \newtheorem{prop}[subsection]{Proposition}
 \newtheorem{thm}[subsection]{Theorem}

 \newtheorem{quest}[subsection]{Question}
 
{
    \newtheoremstyle{upright}%
        {8pt plus2pt minus4pt}%
        {8pt plus2pt minus4pt}%
        {\upshape}%
        {}%
        {\bfseries\scshape}%
        {}%
        {1em}%
        {}%
\theoremstyle{upright}

 \newtheorem{defn}[subsection]{Definition}
 
 \newtheorem{exa}[subsection]{Example}

}

 \newcommand{\C}{\mathbb C}
 \newcommand{\N}{\mathbb N}
 \newcommand{\PP}{\mathbb P}
 
 \newcommand{\Q}{\mathbb Q}
 \newcommand{\R}{\mathbb R}
 \newcommand{\Z}{\mathbb Z}
 
 \newcommand{\rddown}[1]{\left\lfloor{#1}\right\rfloor} 
 \newcommand{\nrd}[1]{\langle{#1}\rangle}
 

\title{\large T\MakeLowercase{he augmented base locus of real divisors over arbitrary fields}}
\thanks{2010 MSC: 14E30, 14A15.}
\author{\large C\MakeLowercase{aucher} B\MakeLowercase{irkar}}
\date{\today}
\begin{document}
\maketitle

\begin{abstract} 
We show that the augmented base locus coincides with the exceptional 
locus (i.e. null locus) for any nef $\R$-Cartier divisor on any scheme projective  
over a field (of any characteristic). 
Next we prove a semi-ampleness criterion in terms of the augmented base locus 
generalizing a result of Keel. We also study nef divisors with positive top intersection number, and   
discuss some problems related to augmented base loci of log divisors.
\end{abstract}



\section{Introduction}

The base locus of a linear system is a fundamental notion in algebraic and especially 
birational geometry. The restricted base locus (also called the non-nef locus) and the augmented base locus 
(also called the non-ample locus) are 
refinements of the base locus which capture more essential properties of divisors and linear systems. These are 
closely related to important concepts and problems in birational geometry, eg see 
[\ref{BDPP}][\ref{Nakamaye}][\ref{Nakamaye-2}][\ref{ELMNP}][\ref{ELMNP-2}][\ref{CMM}][\ref{BH}]. 

We start with some definitions.\\

{\textbf{The augmented base locus.}}
Let $X$ be a scheme. An \emph{$\R$-Cartier divisor}
is an element of $\Div(X)\otimes_\Z\R$ where $\Div(X)$ is the group of Cartier divisors. 
A \emph{$\Q$-Cartier divisor} is defined similarly by tensoring with $\Q$.

\begin{defn}\label{d-abl}
Let $X$ be a projective scheme over a field $k$.
The \emph{stable base locus} of a $\Q$-Cartier divisor $L$ is defined as 
$$
{\bf{B}}(L)=\bigcap_{m\in \N, ~mL ~\mbox{Cartier}} \Bs|mL|
$$ 
that is, it is the set of 
those points $x\in X$ such that every section of every $mL$ vanishes where $m$ is 
a positive integer and $mL$ is Cartier. The base locus, stable base locus, and all the other base loci defined 
below are considered with the reduced induced structure.
The \emph{augmented base locus} of  $L$ is defined as 
$$
{\bf{B_+}}(L)=\bigcap_{m\in \N} {\bf{B}}({mL}-A)
$$
where $A$ is any  ample Cartier divisor. 
\end{defn}

The augmented base locus of $\R$-Cartier divisors on smooth projective varieties was 
defined in [\ref{ELMNP-2}]. 
For basic properties of the augmented base locus in this context see [\ref{ELMNP}][\ref{ELMNP-2}].
We give a different definition which is more convenient 
for our purposes (the two definitions agree, by Lemma \ref{l-independence} (3) below).

\begin{defn}\label{d-abl-2}
Let $X$ be a projective scheme over a field $k$.
Let $L$ be an $\R$-Cartier divisor on $X$. We can write $L\sim_\R\sum t_iA_i$ 
where $A_1,\dots, A_r$ are very ample Cartier divisors and $t_i\in \R$. The $A_i$ are not necessarily distinct 
and the expression is obviously not unique. 
Define $\langle mL \rangle=\sum \rddown{mt_i}A_i$ which depends on the above expression.  
Next define the augmented base locus of $L$ as 
$$
{\bf{B_+}}(L)=\bigcap_{m\in \N} {\bf{B}}(\nrd{mL}-A)
$$
where $A$ is any ample Cartier divisor. 
\end{defn}

It turns out that ${\bf{B_+}}(L)$ does not depend on the 
choice of $A$ nor the $A_i$ nor the expression $L\sim_\R\sum t_iA_i$ (see Lemma \ref{l-independence}).\\ 

{\textbf{Relation with the exceptional locus.}}
Let $X$ be a projective scheme over a field $k$ and $L$ an $\R$-Cartier divisor on $X$. 
The \emph{exceptional locus} of $L$ (also called the null locus when $L$ is nef) is defined as 
$$
\mathbb{E}(L):=\bigcup_{\mbox{$L|_V$ not big}} V
$$
where the union runs over the integral subschemes $V\subseteq X$ with positive dimension.

Now we come to the first result of this paper.

\begin{thm}\label{t-main-1}
Let $X$ be a projective scheme over a field $k$. Assume that $L$ is a nef  $\R$-Cartier divisor 
with a given expression $L\sim_\R\sum t_iA_i$ as in \ref{d-abl-2}, 
and $A$ is a very ample Cartier divisor on $X$. Then
$$
{\bf{B_+}}(L)={\bf{B}}(\nrd{nL}-A)={{\Bs}}|\nrd{nL}-A|=\mathbb{E}(L)
$$
for any sufficiently divisible $n\in \N$. 
\end{thm}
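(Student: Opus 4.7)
The plan is to establish the four-way equality in three stages: prove the three base-locus expressions coincide for sufficiently divisible $n$ by Noetherian stabilization; deduce $\mathbb{E}(L) \subseteq {\bf{B_+}}(L)$ by a direct restriction argument; and tackle the core inclusion ${\bf{B_+}}(L) \subseteq \mathbb{E}(L)$ by induction on $\dim X$.

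The foundational computation is that for $m \mid m'$ with $m' = km$, each coefficient of $\nrd{m'L} - k\nrd{mL}$ equals $\lfloor kmt_i\rfloor - k\lfloor mt_i\rfloor = \lfloor k\{mt_i\}\rfloor \ge 0$, so $E_{m,m'} := \nrd{m'L} - k\nrd{mL}$ is an effective integer combination of the very amples $A_i$. This gives
$$\nrd{m'L}-A = k(\nrd{mL}-A) + E_{m,m'} + (k-1)A,$$
where for $k \ge 2$ the last two terms form a very ample divisor (a sum of very amples is very ample). Passing to stable base loci yields ${\bf{B}}(\nrd{m'L}-A) \subseteq {\bf{B}}(\nrd{mL}-A)$. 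By the Noetherian property this descending chain stabilizes along divisibility to ${\bf{B_+}}(L)$ at some $n_0$. Now choose $k_0 \ge 2$ with $\Bs|k_0(\nrd{n_0 L}-A)| = {\bf{B}}(\nrd{n_0 L}-A)$, and set $n = k_0 n_0$. The decomposition above, together with base-point-freeness of the very ample divisor $E_{n_0,n}+(k_0-1)A$, gives $\Bs|\nrd{nL}-A| \subseteq \Bs|k_0(\nrd{n_0 L}-A)| = {\bf{B_+}}(L)$; the reverse containment is automatic. This establishes the first three equalities.

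The second stage is an easy restriction argument: if $V$ is a positive-dimensional integral subscheme with $V \not\subseteq {\bf{B_+}}(L)$, then $V \not\subseteq \Bs|\nrd{nL}-A|$ for the chosen $n$, so $(\nrd{nL}-A)|_V$ is $\R$-linearly equivalent to an effective divisor on $V$; hence $\nrd{nL}|_V \sim A|_V + (\text{effective})$ is big. Since $nL - \nrd{nL} \sim_\R \sum \{nt_i\}A_i$ is pseudo-effective, $L|_V$ is big as well, so $V \not\subseteq \mathbb{E}(L)$. This gives $\mathbb{E}(L) \subseteq {\bf{B_+}}(L)$.

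For the reverse inclusion ${\bf{B_+}}(L) \subseteq \mathbb{E}(L)$, proceed by induction on $\dim X$, the case $\dim X = 0$ being vacuous. Fix $x \notin \mathbb{E}(L)$; reduce to the case $X$ is irreducible (the reducible case being handled by restricting to the component through $x$ and using standard properties of augmented base loci). Then $L$ must be big on $X$, else $X$ itself lies in $\mathbb{E}(L)$. Choose a general $H \in |mA|$ through $x$ for suitable $m$; every positive-dimensional integral $V \subseteq H$ through $x$ is also a subscheme of $X$ through $x$, so $L|_V$ is big by hypothesis, whence $x \notin \mathbb{E}(L|_H)$. By induction applied to $H$ and $L|_H$ (nef on $H$, with the restricted expression $L|_H \sim_\R \sum t_i A_i|_H$ and $A|_H$ very ample), we obtain $x \notin {\bf{B_+}}(L|_H)$, yielding a section of $(\nrd{n'L}-A)|_H$ not vanishing at $x$ for some large divisible $n'$. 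One then lifts to $X$ via the short exact sequence
$$0 \to \x{O}_X(\nrd{n'L}-A-H) \to \x{O}_X(\nrd{n'L}-A) \to \x{O}_H((\nrd{n'L}-A)|_H) \to 0,$$
which requires $H^1(X,\nrd{n'L}-A-H) = 0$. This vanishing is the main obstacle: Kodaira vanishing is unavailable in arbitrary characteristic, but Fujita's vanishing theorem (valid over any field), combined with bigness of $L$ to express $\nrd{n'L}-A-H$ in the form $pA + N$ with $N$ nef and $p$ large for $n'$ sufficiently divisible, furnishes the required vanishing and completes the argument.
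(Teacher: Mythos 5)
Your first two stages are sound and essentially reproduce Step 1 of the paper's proof together with the easy inclusion $\mathbb{E}(L)\subseteq {\bf{B}}_+(L)$. The gap is in Stage 3, at the claimed vanishing $H^1(X,\mathcal{O}_X(\nrd{n'L}-A-H))=0$. You propose to obtain it from Fujita vanishing by writing $\nrd{n'L}-A-H=pA+N$ with $N$ nef Cartier and $p$ large. But $\nrd{n'L}-A-H-pA\sim_\R n'L-\sum\{n't_i\}A_i-(1+m+p)A$, i.e. $n'L$ minus an ample divisor, and if $n'L-cA$ were nef for some $c>0$ then $L=\tfrac{1}{n'}(n'L-cA)+\tfrac{c}{n'}A$ would be ample, in which case ${\bf{B}}_+(L)=\mathbb{E}(L)=\emptyset$ and there is nothing to prove. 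In every nontrivial case there is a curve $C$ with $L\cdot C=0$ (e.g. $L$ the pullback of $\mathcal{O}(1)$ to the blow-up of $\PP^2$ at a point, which is nef and big), and then $\nrd{n'L}-A-H-pA$ has negative degree on $C$ for all $n'$ and all $p\ge 0$, so no such decomposition exists; bigness of $L$ does not help. The required $H^1$ vanishing is precisely where the whole difficulty of the theorem sits, and your argument does not supply it.

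For comparison, the paper never subtracts a fixed ample divisor from $\nrd{n'L}$ and asks the result to be nef. It applies Fujita vanishing only to twists of the shape $\nrd{mL}-\nrd{rnL}+rA$ with $m>rn$ and $r\gg 0$, where the negative part $-\nrd{rnL}$ is almost cancelled by $\nrd{mL}$ (the difference is $(m-rn)L$ plus a bounded error absorbed by $rA$). In place of lifting a prescribed section across a hyperplane, it compares growth rates (Proposition \ref{l-growth}): on the $e$-dimensional component where $L$ is big, $h^0$ of the relevant sheaf grows like $n^e$ while $h^1$ of the obstruction sheaf grows at most like $n^{e-1}$, so some section lifts for infinitely many $n$; that section is then iterated, its stabilized zero scheme $E\subsetneq X$ captures ${\bf{B}}_+(L)$, and Noetherian induction on closed subschemes finishes the proof. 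To salvage your hyperplane strategy you would need to replace the exact $H^1$ vanishing by such an asymptotic $h^0$ versus $h^1$ comparison, which effectively restructures the argument along the paper's lines.
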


The theorem was first proved for $X$ smooth, $\Q$-Cartier $L$, and $k$ algebraically closed of characteristic zero 
by Nakamaye [\ref{Nakamaye}] using Kodaira 
type vanishing theorems, and this was generalized to $\R$-Cartier divisors by 
Ein-Lazarsfeld-Musta\c{t}\u{a}-Nakamaye-Popa [\ref{ELMNP-2}].
Nakamaye's result was extended to log canonical varieties by 
Cacciola-Felice Lopez [\ref{CF}] again by using Kodaira type vanishing theorems. They also give some 
applications to the moduli spaces of curves. 
Related results concerning the restricted volume are 
proved on normal varieties by 
Boucksom-Cacciola-Felice Lopez [\ref{BCF}].
 
The theorem was proved by Cascini-M$^{\rm c}$Kernan-Musta\c{t}\u{a} [\ref{CMM}] when $k$ 
is algebraically closed of positive characteristic 
using techniques related to Keel [\ref{Keel}]: 
the main ingredients are Serre vanishing and the Frobenius. Fujino-Tanaka [\ref{FT}] employ similar 
arguments on surfaces using Fujita vanishing and the Frobenius. 
We will also use Fujita vanishing (but not the Frobenius).\\

{\textbf{A semi-ampleness criterion.}}
The following semi-ampleness result was first proved by Keel [\ref{Keel}] when $k$ has positive characteristic.
A simplified proof of Keel's result was given by Cascini-M$^{\rm c}$Kernan-Musta\c{t}\u{a} [\ref{CMM}].

\begin{thm}\label{t-main-2}
Let $X$ be a projective scheme over a field $k$. Assume that $L$ is a nef $\Q$-Cartier divisor on $X$. 
Then there is a closed subscheme $Z\subseteq X$ such that 

$\bullet$ the reduced induced scheme associated to $Z$ is equal to $\mathbb{E}(L)$, and  

$\bullet$ $L$ is semi-ample if and only if $L|_{Z}$ is semi-ample.
\end{thm}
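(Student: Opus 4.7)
\emph{Construction of $Z$.} Applying Theorem~\ref{t-main-1}, I choose a sufficiently divisible $n\in\N$ and a very ample Cartier divisor $A$ with $\mathbb{E}(L)=\mathrm{Bs}|\nrd{nL}-A|$ set-theoretically. After replacing $L$ by a positive integer multiple (harmless for semi-ampleness of both $L$ and $L|_Z$) and choosing the expression $L\sim_\R\sum t_iA_i$ with integer $t_i$, I may assume $L$ is Cartier and $\nrd{nL}=nL$. Set $M:=nL-A$ and let $Z\subseteq X$ be the scheme-theoretic base locus of $|M|$, i.e.\ the closed subscheme cut out by
$$\mathcal{I}_Z:=\mathrm{im}\bigl(H^0(X,\mathcal{O}(M))\otimes_k\mathcal{O}_X(-M)\to\mathcal{O}_X\bigr).$$
By construction $\mathcal{I}_Z\cdot\mathcal{O}(M)$ is globally generated, and Theorem~\ref{t-main-1} gives $Z_{\rm red}=\mathbb{E}(L)$.

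\emph{Easy direction and reduction of the converse.} If $L$ is semi-ample, some $mL$ is base-point-free and so is its restriction to $Z$, proving $L|_Z$ is semi-ample. Conversely, assume $L|_Z$ is semi-ample; after passing to a further multiple of $L$ I may assume $L|_Z$ is base-point-free, so that $\mathcal{O}_Z(mL)$ is globally generated for every $m\ge 1$. The multiplication $H^0(X,A)\otimes H^0(X,M)\to H^0(X,nL)$, combined with the very ampleness of $A$ and the global generation of $\mathcal{I}_Z\cdot\mathcal{O}(M)$, yields a linear subsystem of $|nL|$ whose base scheme is contained in $Z$; in particular $\mathrm{Bs}|knL|\subseteq Z$ for every $k\ge 1$, so for $m\in n\N$ the sheaf $\mathcal{O}(mL)$ is already generated on $X\setminus Z$. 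The task is therefore to produce, for some $m\in n\N$, sections of $\mathcal{O}(mL)$ that generate it along $Z$ as well.

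\emph{Lifting sections via Fujita vanishing, and the main obstacle.} The natural tool is the exact sequence
$$0\to\mathcal{I}_Z\otimes\mathcal{O}(mL)\to\mathcal{O}(mL)\to\mathcal{O}_Z(mL)\to 0,$$
via which lifting the generators of $\mathcal{O}_Z(mL)$ to $X$ would follow from the vanishing $H^1(X,\mathcal{I}_Z(mL))=0$. Fujita vanishing applied to the coherent sheaf $\mathcal{I}_Z$ against the ample $A$ produces an integer $s_0$ with $H^1(X,\mathcal{I}_Z(s_0A+N))=0$ for every nef Cartier divisor $N$; taking $N=mL$ immediately shows that $\mathcal{O}(mL+s_0A)$ is base-point-free for all sufficiently divisible $m$. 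The main obstacle — exactly where Keel and Cascini--M\textsuperscript{c}Kernan--Musta\c{t}\u{a} invoke the Frobenius in positive characteristic — is to remove the auxiliary $s_0A$ and conclude base-point-freeness of $\mathcal{O}(mL)$ itself. This Keel-type bootstrap proceeds by exploiting that every higher power $\mathcal{I}_Z^r\cdot\mathcal{O}(rM)$ is again globally generated (by $r$-fold products of sections of $|M|$), applying Fujita vanishing to each $\mathcal{I}_Z^r$, and using the filtration $\mathcal{I}_Z\supseteq\mathcal{I}_Z^2\supseteq\cdots$ whose graded pieces are supported on thickenings of $Z$, where the base-point-freeness of $L$ is known from the hypothesis; iterating, one trades the auxiliary $s_0A$ for powers of $\mathcal{I}_Z$ that are absorbed into sheaves already controlled on $Z$.
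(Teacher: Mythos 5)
There is a genuine gap, and it sits exactly at the point you flag as ``the main obstacle.'' Your construction of $Z$ as the scheme-theoretic base locus of $|M|$ and the easy direction are fine, but the converse is not proved: the ``Keel-type bootstrap'' you sketch does not go through as described. Two concrete problems. First, applying Fujita vanishing to each $\mathcal{I}_Z^r$ separately produces a constant $s_r$ depending on $r$, with no uniformity, so iterating over the filtration $\mathcal{I}_Z\supseteq\mathcal{I}_Z^2\supseteq\cdots$ does not let you remove the fixed auxiliary twist $s_0A$. Second, and more fundamentally, the graded pieces $\mathcal{I}_Z^r/\mathcal{I}_Z^{r+1}$ are supported on the infinitesimal thickenings of $Z$, and the hypothesis only gives semi-ampleness of $L$ restricted to the scheme $Z$ itself, not to any thickening. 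Keel's example (cited in the paper) shows precisely that in characteristic zero semi-ampleness on a smaller or reduced subscheme need not propagate to a thickening; so the claim that the graded pieces are ``already controlled on $Z$'' assumes a version of the statement you are trying to prove. In positive characteristic this is exactly where Keel and Cascini--M$^{\rm c}$Kernan--Musta\c{t}\u{a} use the Frobenius, which is unavailable here.

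The paper circumvents this by a different architecture. It runs a Noetherian induction on closed subschemes and, instead of the base scheme of $|M|$, uses the subscheme $E$ built in Step 5 of the proof of Theorem \ref{t-main-1}: the zero scheme of a section $\alpha_r$ of $\mathcal{O}_X(\nrd{rnL}-rA)$ whose kernels $\mathcal{T}_i$ have stabilized. The point is that the ideal $\mathcal{E}=\mathcal{I}_E$ then sits in an exact sequence
$$
0\to \mathcal{T}_r(-\nrd{rnL}+rA)\to \mathcal{O}_X(-\nrd{rnL}+rA)\to \mathcal{E}\to 0,
$$
so after twisting by $\nrd{mL}$ the two outer terms are a \emph{fixed} sheaf ($\mathcal{T}_r$ or $\mathcal{O}_X$) twisted by (nef) $+$ (sufficiently ample $rA$), and Fujita vanishing gives $H^1(\mathcal{E}(\nrd{mL}))=0$ directly, with no leftover $s_0A$: the auxiliary ample divisor is absorbed into the construction of $E$ rather than appearing as a twist to be removed. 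This yields surjectivity of $H^0(\mathcal{O}_X(mL))\to H^0(\mathcal{O}_E(mL))$, hence ${\bf{B}}(L)={\bf{B}}(L|_E)$, and the final $Z$ is then produced by the induction hypothesis applied to the proper closed subscheme $E$ --- in particular $Z$ is not specified in advance, whereas your argument commits to one particular $Z$ and would still need to show that this choice works.
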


 When $k$ has positive characteristic we can use the Frobenius to show that 
in fact we can take $Z=\mathbb{E}(L)$. However, when $k$ has characteristic zero  
in general we cannot take $Z=\mathbb{E}(L)$, by Keel [\ref{Keel}, \S 3]. Although $Z$ is not 
unique but some choice can be calculated for any given $X,L$. It is interesting to see whether 
the theorem holds if $L$ is only $\R$-Cartier.\\

{\textbf{Divisors with positive top intersection number.}}
Assume that $X$ is a normal projective variety of dimension $d$ over an algebraically closed field $k$, 
and $L$ a nef $\R$-Cartier divisor with $L^d>0$. 
If $L$ is $\Q$-Cartier, then it is a well-known fact that $L$ is big. 
When $L$ is only $\R$-Cartier, Shokurov [\ref{Sh-log-models}, Lemma 6.17] shows that 
$L$ is big if $k$ has characteristic zero, using resolution of singularities and Kawamata-Viehweg vanishing 
(although he only proves that $L\sim_\Q M\ge 0$ but his proof can be extended to show that 
$L$ is big). Here 
big means that $L\sim_\R A+D$ where $A$ is an ample $\R$-Cartier divisor and $D$ is 
an effective $\R$-Cartier divisor.   
In [\ref{Keel}, Remark 5.5.3], Keel asks whether the same holds in positive characteristic. We show that 
indeed it holds in a quite general setting. 

\begin{thm}\label{t-main-3}
Let $X$ be a projective variety (i.e. projective integral scheme) of dimension $d$ over a field $k$. 
Assume that $L$ is a nef $\R$-Cartier divisor on $X$. 
Then $L$ is big if and only if $L^d>0$.
\end{thm}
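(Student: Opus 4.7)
The plan is to prove both implications. The \emph{only if} direction is straightforward: if $L \sim_\R A + E$ with $A$ an ample $\R$-Cartier divisor and $E$ effective, then nefness of $L$ yields
$$L^d \ge L^{d-1}\cdot A \ge L^{d-2}\cdot A^2 \ge \cdots \ge A^d > 0$$
by iterating the decomposition $L = A + E$ inside each intersection factor.

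The substantive direction requires more work. Fix an expression $L \sim_\R \sum t_i A_i$ with very ample $A_i$ and fix a very ample Cartier divisor $A$ on $X$. Set $D_n := \nrd{nL} = \sum \rddown{nt_i} A_i$ and $F_n := \sum \{nt_i\} A_i$, so that $D_n$ is Cartier, $F_n$ is an effective $\R$-Cartier divisor, and $nL \sim_\R D_n + F_n$. I would aim to produce a nonzero section of $D_n - A$ for some $n$; once available, $D_n \sim A + E$ with $E$ effective, so $nL \sim_\R A + E + F_n$ displays $L$ as the sum of an ample and an effective $\R$-Cartier divisor, hence big.

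To produce the section, I would estimate $h^0(X, D_n - A)$ via Riemann--Roch together with Fujita vanishing. Asymptotic Riemann--Roch on the integral projective scheme $X$ gives
$$\chi(X, D_n - A) = \frac{(D_n - A)^d}{d!} + O(n^{d-1}) = \frac{L^d}{d!}\, n^d + O(n^{d-1}),$$
since $D_n - A \sim_\R nL - F_n - A$ and the coefficients of $F_n$ are bounded. To pass from $\chi$ to $h^0$ one needs $h^i(X, D_n - A) = O(n^{d-1})$ for $i \ge 1$. The divisor $D_n - A$ is itself not nef, but with the very ample $M := A + \sum A_i$ we have
$$D_n - A + M \sim_\R nL + \sum (1 - \{nt_i\}) A_i,$$
which is nef Cartier. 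Fujita vanishing then provides a fixed $l$, independent of $n$, with $H^i(X, D_n - A + lM) = 0$ for all $i \ge 1$; picking an effective $Y \in |lM|$ and using the short exact sequence
$$0 \to \mathcal O_X(D_n - A) \to \mathcal O_X(D_n - A + lM) \to \mathcal O_Y(D_n - A + lM) \to 0$$
bounds $h^i(X, D_n - A)$ in terms of $h^{i-1}$ of a Cartier divisor on a projective scheme $Y$ of dimension $d-1$.

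The main obstacle I anticipate is showing that every $h^j$ on $Y$ of a Cartier divisor whose class grows linearly in $n$ is $O(n^{d-1})$. I would run the same Fujita-vanishing-and-slicing argument inductively on the dimension of the ambient scheme, handling each irreducible component of $Y$ separately in the inductive step so that reducibility or non-reducedness of $Y$ causes no trouble, with the base case $\dim = 0$ trivial. Once this cohomological estimate is in place, the inequality $h^0(X, D_n - A) \ge \chi(X, D_n - A) - \sum_{i \ge 1} h^i(X, D_n - A)$ is positive for $n \gg 0$, producing the desired section and completing the proof.
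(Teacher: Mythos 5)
Your proof is correct, and for the substantive direction it follows essentially the paper's route: asymptotic Riemann--Roch plus Fujita vanishing to produce a nonzero section of $\nrd{nL}-A$, which by integrality of $X$ exhibits $nL$ as ample plus effective. The one genuine difference in that direction is how higher cohomology is controlled. You bound $h^i(X,\nrd{nL}-A)$ for all $i\ge 1$ by slicing and an induction on dimension, which forces you to handle higher cohomology of a non-nef divisor on possibly reducible, non-reduced slices; the paper instead first applies Fujita vanishing to $\nrd{mL}+A$, which is (nef Cartier) $+$ (sufficiently ample), so \emph{all} its higher cohomology vanishes outright and $h^0=\chi$ grows like $m^d$, and then passes down to $\nrd{mL}-A$ via the restriction sequence to $2A$, where only an $h^0$ bound on a $(d-1)$-dimensional scheme (Lemma \ref{l-h^0}) is needed. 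If you keep your version, be aware that the inductive slicing step requires the hyperplane to avoid the finitely many associated points of the sheaf on each slice, arranged by first extending $k$ to an infinite field as in \ref{ss-restriction}; ``handling each irreducible component separately'' is not quite sufficient when the slice is non-reduced, so phrase the induction for coherent sheaves with $e$-dimensional support rather than for components. For the converse direction your argument is genuinely different and simpler: you use the standard nef chain $L^d\ge L^{d-1}\cdot A\ge\cdots\ge A^d>0$ (valid since intersecting $d-1$ nef classes with an effective divisor is non-negative), whereas the paper inducts on dimension by restricting to an irreducible component of a general very ample member $A'$, which needs a genericity choice and an infinite base field. Your version buys a cleaner, characteristic- and field-independent argument for the easy implication; the paper's version of the hard implication buys a shorter vanishing argument at the cost of the extra $\pm A$ bookkeeping.
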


As Keel points out this result has interesting applications to the structure of extremal rays 
on varieties. In fact the characteristic zero case already plays a crucial role in the 
study of extremal rays on the moduli space of 
pointed rational curves  (see Keel-M$^{\rm c}$Kernan [\ref{KMc}, Lemma 2.2] and its many uses therein).\\

{\textbf{The augmented base locus of log divisors.}}
Let $(X,B)$ be a projective pair over an algebraically closed field $k$ and $A$ a nef 
and big $\R$-divisor such that $L=K_X+B+A$ is nef. The locus ${\bf{B}_+}(L)$ is 
closely related to the geometry of $X$. In Section 6 we recall some results and pose some 
questions concerning such loci.\\

{\textbf{Acknowledgements.}} 
I would like to thank Mircea Musta\c{t}\u{a} and Karl Schwede for discussions related to Section 6. 
This work was supported by a Leverhulme grant.\\

\section{Preliminaries}

\subsection{The operator $\nrd{-}$}\label{ss-<>} 
Let $L$ be an $\R$-Cartier divisor with an expression $L\sim_\R \sum t_iA_i$ as in 
\ref{d-abl-2}. Let $\pi\colon X'\to X$ be a morphism such that $A_i':=\pi^*A_i$ is 
very ample for each $i$, eg $\pi$ is a closed embedding or it is obtained by 
base change as in \ref{ss-b-change} below.
Then we get the expression $L':=\pi^*L\sim_\R \sum t_iA_i'$ which we can use to 
define $\nrd{mL'}$. It is clear that $\nrd{mL'}=\pi^*\nrd{mL}$.

For a coherent sheaf $\mathcal{F}$ on $X$, we often use the notation 
$\mathcal{F}\nrd{mL}$ instead of $\mathcal{F}(\nrd{mL})$.

\subsection{{Growth of functions.}}
Let $h\colon \Z\to \Z$ be a function. We say that the \emph{upper growth} of $h$ is like $m^d$ 
(resp. at most like $m^d$) if 
$$
0<\limsup_{m\to +\infty} \frac{h(m)}{m^d}<+\infty
$$ 
(resp. $\limsup_{m\to +\infty} \frac{h(m)}{m^d}<+\infty$).

\subsection{{Divisors.}}
Let $X$ be a projective scheme over a field $k$. The group of Cartier divisors on $X$ 
 is denoted by $\Div(X)$ (this group modulo linear equivalence is denoted 
${\rm{Cl}}(X)$). Recall that an $\R$-Cartier divisor 
(resp. $\Q$-Cartier divisor) is an element of $\Div(X)\otimes_\Z \R$ (resp. $\Div(X)\otimes_\Z \Q$).
Such a divisor can be represented as $L=\sum l_iL_i$ where $l_i\in \R$ (resp. $l_i\in \Q$) 
and $L_i$ are Cartier divisors but this representation is not unique. Two $\R$-Cartier divisors $L,L'$ 
are \emph{$\R$-linearly equivalent} (resp. \emph{$\Q$-linearly equivalent}) if $L-L'=\sum a_iN_i$ 
where $a_i\in \R$ (resp. $a_i\in \Q$)
and $N_i$ are Cartier divisors linearly equivalent to zero. We denote the equivalence by 
$L\sim_\R L'$ (resp. $L\sim_\Q L'$). 

An $\R$-Cartier divisor $L$ is: 

$\bullet$ \emph{nef} if $L\cdot C\ge 0$ for every 
curve $C\subseteq X$ (a curve is an integral closed subscheme of dimension one);

$\bullet$ \emph{ample} if 
$L\sim_\R\sum l_iL_i$ with $l_i> 0$ and $L_i$ ample Cartier divisors;

$\bullet$ \emph{effective} if 
$L=\sum l_iL_i$ with $l_i\ge 0$ and $L_i$ effective Cartier divisors;

$\bullet$ \emph{big} if $L\sim_\R A+D$ where $A$ is an ample 
$\R$-Cartier divisor and $D$ is an effective $\R$-Cartier divisor;

$\bullet$ \emph{semi-ample} if 
there is a projective morphism $f\colon X\to Y$ over $k$ and an ample 
$\R$-Cartier divisor $H$ on $Y$ such that $L\sim_\R f^*H$.

\subsection{{Pairs.}} 
A pair $(X,B)$ over a field $k$ consists of a normal quasi-projective 
variety over $k$ and a Weil $\R$-divisor $B$ with coefficients in $[0,1]$ such that 
$K_X+B$ is $\R$-Cartier where $K_X$ is the canonical divisor. The pair is 
\emph{klt} if for every projective birational morphism $f\colon Y\to X$ from a normal 
variety the coefficients of $B_Y$ are all $<1$ where 
$K_Y+B_Y=f^*(K_X+B)$.

\subsection{{Fujita vanishing theorem.}}\label{ss-Fujita-v}
This is a generalization of Serre vanishing theorem.
Let $X$ be a projective scheme over a field $k$, $A$ an ample Cartier divisor, and 
$\mathcal{F}$ a coherent sheaf on $X$. Then there is a number $m_0$ such that 
$h^i(\mathcal{F}(mA+L))=0$ for any $i>0$, $m>m_0$, and nef Cartier divisor $L$ 
[\ref{Fujita}][\ref{Laz}, Theorem 1.4.35].

\subsection{{Restriction to a hyperplane section.}}\label{ss-restriction}
Let $X$ be a projective scheme over a field $k$, $A$ an effective Cartier divisor, and 
$\mathcal{F}$ a coherent sheaf on $X$. Tensoring $\mathcal{F}$ with the exact sequence 
$$
0\to \mathcal{O}_X(-A)\to \mathcal{O}_X \to \mathcal{O}_A \to 0
$$ 
gives a sequence 
$$
0\to \mathcal{F}(-A)\to \mathcal{F} \to \mathcal{F}\otimes \mathcal{O}_A\to 0
$$
which is often not exact on the left. However, if $A$ (considered as a closed subscheme) 
does not contain any of the finitely many associated points of $\mathcal{F}$, then 
the latter sequence is also exact on the left. 

Now if $A$ is a very ample Cartier divisor and if $k$ is infinite, then perhaps after changing 
$A$ up to linear equivalence we can make sure that $A$ does not contain 
any associated point of $\mathcal{F}$. To prove this we first reduce it to the case $X=\PP^n_k$ 
and $\mathcal{O}_X(A)=\mathcal{O}_X(1)$. Then since $k$ is infinite, there are infinitely 
many hyperplanes defined over $k$ (hence infinitely many choices of $A$ up to linear equivalence) 
so we can avoid the associated points of $\mathcal{F}$. If $k$ is not infinite we will 
do a base change to an infinite field to be able to use the above remarks.

\subsection{Base loci and base change}\label{ss-b-change}
Let $X$ be a projective scheme over a field $k$ and let $L$ be a Cartier divisor on $X$. 
Recall that the \emph{base locus} of $L$ is defined as 
$$
\Bs|L|=\{x\in X \mid \mbox{$\alpha$ vanishes at $x$ for every $\alpha\in H^0(\mathcal{O}_X(L))$}\}
$$ 
As pointed out earlier we consider $\Bs|L|$ (and other loci) with the reduced structure. 
Recall that ${\bf{B}}(L)=\bigcap_{m\in \N} \Bs|mL|$. If $n,n'\in\N$, then each section 
$\alpha\in H^0(\mathcal{O}_X(nL))$ gives a section $\alpha^{\otimes n'}\in H^0(\mathcal{O}_X(n'nL))$
hence $\Bs|n'nL|\subseteq \Bs|nL|$. In particular, 
${\bf{B}}(L)=\Bs|mL|$ for every sufficiently divisible $m>0$.

Assume that $k\subseteq k'$ is a field extension and that $X'$ and $L'$ are the scheme and Cartier 
divisor obtained by base change to $k'$. Let $\pi\colon X'\to X$ be the corresponding morphism. 
Since 
$$
H^0(\mathcal{O}_X(L'))=H^0(\mathcal{O}_X(L))\otimes_kk'
$$ 
we can see that $\pi^{-1}\Bs|L|=\Bs|L'|$. 
This in turn implies that $\pi^{-1}{\bf{B}}(L)={\bf{B}}(L')$.

Now assume that $L$ is $\R$-Cartier with a given expression $L\sim_\R \sum t_iA_i$ 
as in \ref{d-abl-2}. As pointed out in \ref{ss-<>}, $\nrd{mL'}=\pi^*\nrd{mL}$ hence 
 $\pi^{-1}{\bf{B}}_+(L)={\bf{B}}_+(L')$.
 
With a little more work we can also see that $\pi^{-1}\mathbb{E}(L)\supseteq\mathbb{E}(L')$.
Indeed, let $V'$ be a component of $\mathbb{E}(L')$, let $W$ be the closure of $\pi(V')$, 
and let $W'$ be the scheme obtained from $W$ by base change. 
If $L|_W$ is big then $L|_W\sim_\R A_W+D_W$ where $A_W$ is ample and $D_W$ is 
effective. But then $L'|_{W'}\sim_\R A'_{W'}+D_{W'}'$ where $A_{W'}'$ is ample and 
$D_{W'}'$ is effective. Now $V'\nsubseteq D_{W'}'$ otherwise $W\subseteq D_W$ which is not 
possible. So by restricting to $V'$ we get $L'|_{V'}\sim_\R A'_{V'}+D_{V'}'$ which 
means that $L'|_{V'}$ is big, a contradiction. Therefore $\pi^{-1}\mathbb{E}(L)\supseteq \mathbb{E}(L')$.
So if in some situation we want to show that ${\bf{B}}_+(L)\subseteq \mathbb{E}(L)$, 
then it is enough to show that ${\bf{B}}_+(L')\subseteq \mathbb{E}(L')$ because $\pi$ is surjective.

\section{The augmented base locus is well-defined}

In this section, we show that the augmented base locus as defined in Definition \ref{d-abl-2} 
is well-defined. We also show that the definition agrees 
with \ref{d-abl} and the one in [\ref{ELMNP-2}].

\begin{lem}\label{l-independence}
Let $X$ be a projective scheme over a field $k$ and $L$ an $\R$-Cartier 
divisor with a given expression $L\sim_\R \sum t_iA_i$ as in \ref{d-abl-2}. 
Then ${\bf{B}}_+(L)$ as defined in \ref{d-abl-2} satisfies 
the following assertions: 

$(1)$ ${\bf{B}}_+(L)$ does not depend on the choice 
of $A$ nor the expression $L\sim_\R \sum t_iA_i$; 

$(2)$ for any positive rational number $s$ we have ${\bf{B}}_+(sL)={\bf{B}}_+(L)$;

$(3)$ ${\bf{B}}_+(L)=\bigcap {\bf{B}}(L-H)$ where $H$ runs over all ample $\R$-Cartier divisors 
so that $L-H$ is $\Q$-Cartier;

$(4)$ if $L$ is $\Q$-Cartier then ${\bf{B}}_+(L)$ coincides with the one defined in \ref{d-abl}. 
\end{lem}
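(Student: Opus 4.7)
The key identity underlying the proof is
\begin{equation*}
mL \sim_\R \nrd{mL} + E_m, \qquad E_m := \sum \{mt_i\} A_i,
\end{equation*}
where the fractional parts $\{mt_i\}\in[0,1)$ render the $\R$-Cartier divisor $E_m$ effective and uniformly bounded above coefficient-wise by the fixed Cartier divisor $\sum A_i$. This identity is the bridge between the integer Cartier divisor $\nrd{mL}$ and the genuine $\R$-Cartier divisor $mL$.

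My plan is to prove (3) first and deduce (1), (2), and (4) from it. For the inclusion $\bigcap_H {\bf{B}}(L-H) \subseteq {\bf{B_+}}(L)$, set $H_m := L - \tfrac{1}{m}(\nrd{mL} - A)$. A direct computation using $L \sim_\R \sum t_i A_i$ gives $H_m \sim_\R \tfrac{1}{m}(A + E_m)$, a strictly positive $\R$-combination of the ample Cartier divisors $A, A_1, \dots, A_r$, hence ample. Meanwhile $L - H_m = \tfrac{1}{m}(\nrd{mL} - A)$ is manifestly $\Q$-Cartier, so $H_m$ belongs to the $H$-family, and ${\bf{B}}(L - H_m) = {\bf{B}}(\nrd{mL} - A)$ by the standard scaling invariance of stable base loci under positive rationals. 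Intersecting over $m$ exhibits each term of ${\bf{B_+}}(L)$ as some ${\bf{B}}(L - H_m)$, yielding the inclusion.

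For the reverse inclusion ${\bf{B_+}}(L) \subseteq {\bf{B}}(L-H)$ of (3), fix $H$ in the family, choose $r$ with $r(L-H)$ Cartier, and assume $x \notin {\bf{B}}(L-H)$ is witnessed by $\sigma \in H^0(r(L-H))$ with $\sigma(x)\neq 0$. Define the Cartier divisor
\begin{equation*}
C_q' := (\nrd{qrL} - A) - qr(L-H).
\end{equation*}
The key identity gives $C_q' \sim_\R qrH - E_{qr} - A$, which is ample $\R$-Cartier for $q \gg 0$ since $qrH$ dominates $A + \sum A_i \geq A + E_{qr}$. Hence $C_q'$ is numerically ample; being Cartier, it is ample Cartier. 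For $p \gg 0$, $pC_q'$ is very ample, hence base-point-free, so there is $\tau \in H^0(pC_q')$ with $\tau(x) \neq 0$. Since $p(\nrd{qrL} - A) = pqr(L-H) + pC_q'$ as Cartier divisors, the product $\sigma^{pq}\cdot \tau$ is a section of $p(\nrd{qrL} - A)$ non-vanishing at $x$, so $x \notin {\bf{B}}(\nrd{qrL} - A)$ and thus $x \notin {\bf{B_+}}(L)$.

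Part (1) follows from (3) because the right-hand side depends only on $L$. Part (2) follows from the bijection $H \mapsto sH$ between the $H$-families indexing ${\bf{B_+}}(L)$ and ${\bf{B_+}}(sL)$, together with ${\bf{B}}(s(L-H)) = {\bf{B}}(L-H)$ for positive rational $s$. For (4), when $L$ is $\Q$-Cartier any $H$ with $L-H$ being $\Q$-Cartier is itself $\Q$-Cartier, so the $H$-family of (3) is just the set of ample $\Q$-Cartier divisors; the choice $H_m = A/m$ gives one inclusion with Definition~\ref{d-abl}, while the other follows by a parallel argument to the reverse inclusion of (3) (now with all divisors $\Q$-Cartier, so no $\R$-subtleties arise). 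The principal obstacle throughout is the reverse inclusion of (3), resolved by the observation that the naively $\R$-Cartier error $qrH - E_{qr} - A$ is $\R$-linearly equivalent to the Cartier divisor $C_q'$ obtained as the literal Cartier difference of the two sides, bringing very-ampleness and base-point-freeness to bear.
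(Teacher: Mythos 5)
Your proof is correct, but it is organized quite differently from the paper's. The paper attacks (1) head-on: it compares $\rddown{lmt_i}$ with $l\rddown{mt_i}$ and with $\rddown{lmt_i'}$, absorbing the fractional-part errors $u_i,u_i'$ into a suitably enlarged ample divisor ($A=\sum A_i+G$), and only then deduces (2)--(4). You invert the logic: you prove the choice-free characterization (3) first, for an arbitrary fixed $A$ and expression, so that (1) becomes automatic (the right-hand side of (3) mentions neither), (2) follows from the rescaling bijection $H\mapsto sH$ of the index family, and (4) follows by identifying both definitions with the common $H$-characterization. This buys a cleaner structure: the double-floor bookkeeping disappears, and the single genuine input is isolated in one place, namely that the Cartier divisor $C_q'=(\nrd{qrL}-A)-qr(L-H)\sim_\R qrH-E_{qr}-A$ is ample for $q\gg 0$, after which multiplying sections does the rest. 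The cost is that you invoke explicitly the fact that a Cartier divisor which is $\R$-linearly equivalent to an ample $\R$-Cartier divisor is an ample Cartier divisor (Nakai--Moishezon/Kleiman over an arbitrary field); but the paper's own proof uses exactly this implicitly at several points (e.g.\ ``$\ldots$ is ample hence $x\notin{\bf{B}}(\sum\rddown{lmt_i'}A_i-G)$'' in (1), and ``since $mH$ is sufficiently ample, ${\bf{B}}(mL-mH)\supseteq{\bf{B}}(\nrd{mL}-A)$'' in (3)), so you are not assuming more than the author does. Two cosmetic points: the integer $r$ should be extracted from the witness $x\notin{\bf{B}}(L-H)$ rather than chosen beforehand, and in (2) you should note that for irrational $s$ the rescaling argument as stated needs $s$ rational --- which is all the lemma claims, so this is fine.
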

\begin{proof}
(1) First we show that ${\bf{B}}_+(L)$ is independent of the choice of $A$. Indeed let $G$ be any other ample 
Cartier divisor. Assume $x\notin \bigcap_{m\in \N} {\bf{B}}(\nrd{mL}-A)$. 
Then 
$$
x\notin  {\bf{B}}(\nrd{mL}-A)={\bf{B}}(\sum\rddown{mt_i}A_i-A)
$$ 
for some $m>0$. 
Thus $x\notin  {\bf{B}}(\sum l\rddown{mt_i}A_i-lA)$ for any sufficiently large $l>0$.
Since
$$
\sum \rddown{lmt_i}A_i-\sum l\rddown{mt_i}A_i
$$ 
is zero or ample, $x\notin  {\bf{B}}(\sum \rddown{lmt_i}A_i-lA)$,  
and since $lA$ is sufficiently ample, 
$x\notin  {\bf{B}}(\sum\rddown{lmt_i}A_i-G)={\bf{B}}(\nrd{lmL}-G)$.
This shows that 
$$
\bigcap_{m\in \N} {\bf{B}}(\nrd{mL}-A)\supseteq \bigcap_{m\in \N} {\bf{B}}(\nrd{mL}-G)
$$
The opposite inclusion $\subseteq$ can be proved similarly hence ${\bf{B}}_+(L)$ is independent of $A$. 

Now we show that ${\bf{B}}_+(L)$ is independent of the expression $L\sim_\R\sum t_iA_i$. 
Indeed assume that $L\sim_\R\sum t_i'A_i'$ is another expression. Redefining the indexes 
we can assume that $A_i'=A_i$. Let $A=\sum A_i+G$ with $G$ ample.
Assume that $x\notin \bigcap_{m\in \N} {\bf{B}}(\sum\rddown{mt_i}A_i-A)$. 
Then $x\notin  {\bf{B}}(\sum\rddown{mt_i}A_i-A)$ 
for some $m$ hence $x\notin   {\bf{B}}(\sum l\rddown{mt_i}A_i-lA)$ for any sufficiently large $l>0$.
Arguing as above we can show that $x\notin   {\bf{B}}(\sum \rddown{lmt_i}A_i-A)$. 
Writing 
$lmt_i =\rddown{lmt_i}+u_i$
and $lmt_i' =\rddown{lmt_i'}+u_i'$, we see that  
$$
( \sum\rddown{lmt_i'}A_i-G)-(\sum \rddown{lmt_i}A_i-A)
$$
$$
\sim_\R\sum u_iA_i-\sum u_i'A_i+A-G
$$
is ample hence  $x\notin{\bf{B}}(\sum \rddown{lmt_i'}A_i-G)$ 
so $x\notin \bigcap_{m\in \N} {\bf{B}}(\sum\rddown{mt_i'}A_i-G)$. 
In other words, 
$$
\bigcap_{m\in \N} {\bf{B}}(\sum\rddown{mt_i}A_i-A)\supseteq \bigcap_{m\in \N} {\bf{B}}(\sum\rddown{mt_i'}A_i-G)
$$
The opposite inclusion $\subseteq$ can be proved similarly bearing in mind that we are free 
to change $A$ and $G$. 

(2) 
It is enough to treat the case when $s\in \N$ is sufficiently large. 
It is obvious that ${\bf{B}}_+(sL)\supseteq {\bf{B}}_+(L)$.
Assume that $x\notin {\bf{B}}_+(L)$. Then $x\notin {\bf{B}}(\nrd{mL}-A)$ for some $m$ hence 
$x\notin {\bf{B}}(s\nrd{mL}-sA)$. Arguing as in (1) we see that $x\notin {\bf{B}}(\nrd{msL}-A)$ which 
implies that $x\notin {\bf{B}}_+(sL)$. That is, ${\bf{B}}_+(sL)\subseteq {\bf{B}}_+(L)$.

(3) 
For each $m>0$,  
$$
{\bf{B}}(\nrd{mL}-A)={\bf{B}}(mL-mH_m)={\bf{B}}(L-H_m)
$$ for some ample $\R$-Cartier 
divisor $H_m$. Thus ${\bf{B}}_+(L)\supseteq \bigcap {\bf{B}}(L-H)$.
Conversely assume $x\notin \bigcap {\bf{B}}(L-H)$. Then $x\notin {\bf{B}}(L-H)$ for some $H$. 
Since $L-H$ is assumed to be $\Q$-Cartier, $mL-mH$ is Cartier for some sufficiently divisible $m>0$. 
Since $mH$ is sufficiently ample, ${\bf{B}}(mL-mH)\supseteq {\bf{B}}(\nrd{mL}-A)$ 
hence $x\notin {\bf{B}}(\nrd{mL}-A)$ which implies that $x\notin {\bf{B}}_+(L)$.

(4)
We can write $L\sim_\Q \sum {t_i}A_i$ with all the $t_i$ rational 
numbers. Pick $s\in \N$ so that $sL$ is Cartier, $st_i$ are all integers, and $sL\sim \sum s{t_i}A_i$. 
Then by (2) and (1)  we have
$$
{\bf{B}}_+(L)={\bf{B}}_+(sL)=\bigcap_{m\in \N} {\bf{B}}(\sum\rddown{mst_i}A_i-sA)=\bigcap_{m\in \N} {\bf{B}}(msL-sA)
=\bigcap_{m\in \N} {\bf{B}}(mL-A)
$$ 
But this is the same as ${\bf{B}}_+(L)$ in Definition \ref{d-abl}.\\ 
\end{proof}

\section{Growth of cohomology}

The next lemma is similar to [\ref{CMM}, Lemma 2.2].

\begin{lem}\label{l-h^0}
Let $X$ be a scheme of dimension $d$ projective over a field $k$. 
Assume that $L$ is an $\R$-Cartier divisor with a given expression $L\sim_\R \sum t_iA_i$ as in \ref{d-abl-2}.
Let $\mathcal{F}$ be a coherent sheaf on $X$.  Then the upper growth of $h^0(\mathcal{F}\nrd{mL})$ is 
at most like $m^{d}$.
\end{lem}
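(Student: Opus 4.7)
The plan is to dominate $\nrd{mL}$ by a fixed positive multiple of a single very ample Cartier divisor $H$ and then deduce the estimate from the polynomial growth of the Hilbert polynomial $\chi(\mathcal{F}(nH))$, whose degree is at most $\dim\Supp\mathcal{F}\le d$.

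First I would reduce to the case that $k$ is infinite via the flat base change described in Subsection~\ref{ss-b-change}, which preserves $h^0$ and $\dim X$ and carries the expression $L\sim_\R\sum t_iA_i$ over intact. Using Subsection~\ref{ss-restriction}, I then replace each $A_i$ by a linearly equivalent effective representative whose support avoids the finitely many associated points of $\mathcal{F}$; this is harmless because $h^0(\mathcal{F}\nrd{mL})$ depends only on the line bundle $\mathcal{O}_X(\nrd{mL})$, hence only on the linear equivalence classes of the $A_i$. Set $H=\sum_iA_i$, a very ample divisor whose support still avoids the associated points of $\mathcal{F}$, and pick an integer $T$ with $T>\max_i|t_i|$. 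For every $m\ge 1$ and every $i$ one has $Tm-\rddown{mt_i}\ge m>0$, so
$$
E_m:=TmH-\nrd{mL}=\sum_i(Tm-\rddown{mt_i})A_i
$$
is an effective Cartier divisor with $\Supp E_m\subseteq\bigcup_i\Supp A_i$, hence also avoiding the associated points of $\mathcal{F}$.

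The canonical section $s_{E_m}$ fits into a short exact sequence $0\to\mathcal{O}_X\to\mathcal{O}_X(E_m)\to\mathcal{O}_{E_m}\to 0$, and tensoring by $\mathcal{F}\nrd{mL}$ remains left-exact because the local equation of $E_m$ is a non-zero-divisor on $\mathcal{F}$ by the choice of the $A_i$. This produces an inclusion $\mathcal{F}\nrd{mL}\hookrightarrow\mathcal{F}\nrd{mL}(E_m)=\mathcal{F}(TmH)$, so $h^0(\mathcal{F}\nrd{mL})\le h^0(\mathcal{F}(TmH))$. By Serre vanishing, for $n$ sufficiently large $h^0(\mathcal{F}(nH))=\chi(\mathcal{F}(nH))$, a polynomial in $n$ of degree at most $\dim\Supp\mathcal{F}\le d$; substituting $n=Tm$ gives $h^0(\mathcal{F}(TmH))=O(m^d)$, which is the desired bound.

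The only real obstacle is the injectivity of multiplication by $s_{E_m}$: it requires $\Supp E_m$ to avoid the embedded (and generic) points of $\mathcal{F}$, and this is precisely what the reduction to infinite $k$ together with the general choice of the $A_i$'s achieves. No induction on dimension is needed, and no vanishing beyond Serre's is invoked.
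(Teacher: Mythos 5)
Your proof is correct, and its core reduction is identical to the paper's: after passing to an infinite field (\ref{ss-b-change}) and choosing the $A_i$ in their linear equivalence classes so that they avoid the associated points of $\mathcal{F}$ (\ref{ss-restriction}), you embed $\mathcal{F}\nrd{mL}$ into $\mathcal{F}(TmH)$ for a fixed very ample $H=\sum A_i$ and a fixed integer $T$ bounding the $t_i$ --- this is exactly the inclusion $\mathcal{F}\nrd{mL}\subseteq \mathcal{F}(mt\sum A_i)$ that the paper uses. The two arguments diverge only in the final step: the paper bounds $h^0(\mathcal{F}(nH))$ by induction on dimension, restricting to a general member of $|H|$ and telescoping the resulting differences, whereas you invoke Serre vanishing together with the fact that $\chi(\mathcal{F}(nH))$ is a polynomial of degree $\dim\Supp\mathcal{F}\le d$. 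Both finishes are standard and valid; yours is shorter because it imports the Hilbert--Snapper polynomial theory wholesale, while the paper's stays self-contained within the restriction technique of \ref{ss-restriction} and rehearses the inductive pattern reused later in Lemma \ref{l-integral-2} and Proposition \ref{l-growth}. One small inaccuracy worth flagging: the inequality $Tm-\rddown{mt_i}\ge m$ can fail when $T-t_i<1$ (e.g.\ $t_i=1/2$, $T=1$, $m=2$), but all your argument actually needs is $Tm-\rddown{mt_i}>0$, which does follow from $Tm>mt_i\ge\rddown{mt_i}$, so the effectiveness of $E_m$ and hence the injection are unaffected.
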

\begin{proof} 
By \ref{ss-b-change}, we can extend $k$ hence assume it is infinite.
Let $t$ be a positive integer such that $t_i\le t$ for every $i$. 
By \ref{ss-restriction}, for each $m>0$, we can change the $A_i$ up to linear equivalence so that  
$$
\mathcal{F}\nrd{mL}=\mathcal{F}(\sum \rddown{mt_i}A_i)\subseteq \mathcal{F}(mt\sum A_i)
$$
So by replacing $L$ with $t\sum A_i$ it is enough to assume that $L$ is an effective very ample Cartier 
divisor and enough to show that the upper growth of $h^0(\mathcal{F}(mL))$ is at most like $m^d$. 
Once again by \ref{ss-restriction}, we can change $L$ up to linear equivalence so that the sequence 
$$
0\to \mathcal{F}((m-1)L)\to \mathcal{F}(mL)\to \mathcal{F}(mL)\otimes \mathcal{O}_L\to 0
$$
is exact. Now if $m\gg 0$, by induction on dimension, the upper growth of 
$$
h^0(\mathcal{F}(mL)\otimes \mathcal{O}_L)=h^0(\mathcal{F}(mL))-h^0(\mathcal{F}((m-1)L))
$$ 
is at most like $m^{d-1}$ hence the upper growth of $h^0(\mathcal{F}(mL))$ is at most like $m^d$.\\
\end{proof}

\begin{lem}\label{l-integral-2}
Let $X$ be an integral scheme of dimension $d$ projective over a field $k$. 
Assume that $L$ is an $\R$-Cartier divisor with a given expression $L\sim_\R \sum t_iA_i$ as in \ref{d-abl-2}. 
Then the following are equivalent:

$(1)$ the upper growth of $h^0(\mathcal{O}_X\nrd{mL})$ is like $m^{d}$;

$(2)$ for some coherent sheaf $\mathcal{F}$, 
the upper growth of $h^0(\mathcal{F}\nrd{mL})$ is like $m^{d}$; 

$(3)$ for any coherent sheaf $\mathcal{F}$ whose support is equal to $X$, 
the upper growth of $h^0(\mathcal{F}\nrd{mL})$ is like $m^{d}$; 

$(4)$ $L$ is big.
\end{lem}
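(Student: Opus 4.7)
The plan is to verify the four implications around the cycle, two of which are formal. Since $X$ is integral, $\mathcal{O}_X$ is a coherent sheaf with support equal to $X$, so the choice $\mathcal{F}=\mathcal{O}_X$ makes (1)$\Rightarrow$(2) and (3)$\Rightarrow$(1) immediate. The real work lies in (2)$\Rightarrow$(4) and (4)$\Rightarrow$(3), and both rely on cutting by a hyperplane together with the upper bound of Lemma \ref{l-h^0}.

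For (2)$\Rightarrow$(4) I would first reduce to the case $\mathcal{F}$ torsion-free: the torsion subsheaf $T\subseteq\mathcal{F}$ has support of dimension $<d$ (as $X$ is integral), so Lemma \ref{l-h^0} bounds the upper growth of $h^0(T\nrd{mL})$ by $m^{d-1}$, and tensoring $0\to T\to \mathcal{F}\to \mathcal{F}'\to 0$ with the line bundle $\mathcal{O}_X\nrd{mL}$ stays exact, so the torsion-free quotient $\mathcal{F}'$ inherits the growth $m^d$. For $\mathcal{F}'$ of generic rank $r$, I would then pick $r$ sections of $(\mathcal{F}')^{\vee}\otimes\mathcal{O}_X(NE)$ (for $E$ ample Cartier and $N\gg 0$, using Serre) forming a basis at the generic point; the resulting evaluation map gives an injection $\mathcal{F}'\hookrightarrow\mathcal{O}_X(NE)^{\oplus r}$, so $h^0(\mathcal{O}_X(NE+\nrd{mL}))$ also has growth like $m^d$. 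Extending $k$ to an infinite field by \ref{ss-b-change}, I then use \ref{ss-restriction} to choose a very ample Cartier $H$ with $H-NE$ still ample; the restriction sequence
\begin{equation*}
0\to \mathcal{O}_X(NE+\nrd{mL}-H)\to \mathcal{O}_X(NE+\nrd{mL})\to \mathcal{O}_H(NE+\nrd{mL})\to 0
\end{equation*}
together with Lemma \ref{l-h^0} on the $(d-1)$-dimensional $H$ (applied with the restricted expression $L|_H\sim_\R\sum t_iA_i|_H$) forces $h^0(\mathcal{O}_X(NE+\nrd{mL}-H))>0$ for some $m$. A nonzero section produces an effective Cartier $D\sim NE+\nrd{mL}-H$, and rearranging yields
\begin{equation*}
mL\sim_\R (H-NE)+D+\sum\{mt_i\}A_i,
\end{equation*}
an ample Cartier plus an effective $\R$-divisor, so $mL$ and hence $L$ is big.

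For (4)$\Rightarrow$(3) I would reduce to the case $\mathcal{F}=\mathcal{O}_X$, i.e.\ to (4)$\Rightarrow$(1), and then handle general $\mathcal{F}$. Writing $L\sim_\R A+F$, for $m_0$ sufficiently large $m_0A-\sum A_i$ is ample and
\begin{equation*}
\nrd{m_0L}\sim_\R \left(m_0A-\sum A_i\right)+\sum(1-\{m_0t_i\})A_i+m_0F
\end{equation*}
is an ample plus effective $\R$-Cartier, exhibiting the Cartier divisor $\nrd{m_0L}$ as big; standard theory for big Cartier divisors on integral projective schemes then supplies $h^0(n\nrd{m_0L})\ge cn^d$ for large $n$. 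The elementary floor comparison
\begin{equation*}
\nrd{nm_0L}-n\nrd{m_0L}=\sum\lfloor n\{m_0t_i\}\rfloor A_i\ge 0
\end{equation*}
is effective, hence $h^0(\nrd{nm_0L})\ge h^0(n\nrd{m_0L})\ge cn^d\sim (nm_0)^d$, which combined with Lemma \ref{l-h^0} proves (1) along the arithmetic progression $m=nm_0$. Finally, for a general $\mathcal{F}$ with $\Supp\mathcal{F}=X$, a section of $\mathcal{F}(NH)$ nonvanishing at the generic point (which exists by Serre) gives an injection $\mathcal{O}_X(-NH)\hookrightarrow\mathcal{F}$, and the restriction sequence for $\mathcal{O}_X(\nrd{mL})$ along $NH$ together with Lemma \ref{l-h^0} on $NH$ yields
\begin{equation*}
h^0(\mathcal{F}\nrd{mL})\ge h^0(\mathcal{O}_X(\nrd{mL}-NH))\ge h^0(\mathcal{O}_X(\nrd{mL}))-O(m^{d-1}),
\end{equation*}
which grows like $m^d$ by (1).

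The main obstacle I expect is the internal step of (4)$\Rightarrow$(1): $\nrd{mL}$ differs from $mL$ only modulo an $\R$-linear equivalence by a bounded error, so $h^0$-bounds on the merely $\R$-Cartier divisor $mL$ do not transfer directly to the genuine Cartier divisor $\nrd{mL}$. The arithmetic-progression trick via the floor identity circumvents this but rests on the standard fact that an $\R$-big Cartier divisor admits the expected polynomial growth of sections, which in turn uses openness of the ample cone in $N^1(X)_\R$ to pass between $\R$- and $\Q$-bigness.
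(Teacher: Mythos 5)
Your proof is correct, but it routes the equivalences differently from the paper and substitutes a different key reduction, so it is worth comparing. The paper proves the cycle $(1)\Rightarrow(3)\Rightarrow(4)\Rightarrow(1)$ together with $(3)\Rightarrow(2)\Rightarrow(1)$: for $(1)\Rightarrow(3)$ it twists $\mathcal{F}$ to be globally generated and extracts an injection $\mathcal{O}_X\hookrightarrow\mathcal{F}$ (the dual of your injection $\mathcal{O}_X(-NH)\hookrightarrow\mathcal{F}$, so your last step of $(4)\Rightarrow(3)$ is essentially the same argument); for $(3)\Rightarrow(4)$ it simply applies $(3)$ to $\mathcal{F}=\mathcal{O}_X(-A)$, which is shorter than your $(2)\Rightarrow(4)$ because it gets the support hypothesis and the twist by $-A$ for free; and for $(2)\Rightarrow(1)$ it uses the d\'evissage filtration $0=\mathcal{F}_0\subset\cdots\subset\mathcal{F}_n=\mathcal{F}$ with graded pieces $f_*\mathcal{J}$ for integral $S\subseteq X$, concluding $S=X$ from Lemma \ref{l-h^0}. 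Your replacement of that filtration by the torsion-free quotient $\mathcal{F}'$ and the generic-rank embedding $\mathcal{F}'\hookrightarrow\mathcal{O}_X(NE)^{\oplus r}$ is a valid alternative that exploits integrality of $X$ more directly and avoids citing d\'evissage, at the cost of then having to redo the hyperplane-restriction step to produce a section of $\mathcal{O}_X(NE+\nrd{mL}-H)$. For $(4)\Rightarrow(1)$ both proofs do the same thing — pass to $\nrd{m_0L}$, rationalize the ample part, and use section growth of an ample Cartier divisor — but your floor identity $\nrd{nm_0L}-n\nrd{m_0L}=\sum\rddown{n\{m_0t_i\}}A_i$ makes explicit the comparison that the paper compresses into ``replacing $L$ with $\nrd{L}$,'' and you correctly identify the rational perturbation from $\sim_\R$-bigness to $\sim_\Q$-bigness as the one genuinely non-formal ingredient (the paper performs the same perturbation with the phrase ``by replacing $A$ we can assume that $L\sim_\Q A+D$''). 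No gaps; just make sure that when you bound $h^0(T\nrd{mL})$ you apply Lemma \ref{l-h^0} to the scheme-theoretic support of $T$, which has dimension $<d$ precisely because $X$ is integral.
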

\begin{proof}
(1) $\implies$ (3): 
We can extend $k$ to an infinite field hence we can use \ref{ss-restriction}.
Pick an effective sufficiently ample divisor $A$ so that 
$$ 
0\to \mathcal{F}\to \mathcal{F}(A)\to \mathcal{F}(A)\otimes \mathcal{O}_A\to 0
$$  
is exact. Applying Lemma \ref{l-h^0} it is enough to show that the upper growth of 
 $h^0(\mathcal{F}(A)\nrd{mL})$ is like $m^{d}$. Thus by replacing $\mathcal{F}$ 
 with $\mathcal{F}(A)$ we can assume that $\mathcal{F}$ is generated by global sections.
Each global section corresponds to a morphism $\mathcal{O}_X\to \mathcal{F}$. 
Since $X$ is integral, the morphism is injective if and only if its image is not torsion. 
Therefore if $\alpha_1,\dots, \alpha_r$ form a basis of 
$H^0(\mathcal{F})$ and if $\phi_i\colon \mathcal{O}_X\to \mathcal{F}$ corresponds to $\alpha_i$, 
then $\phi_i$ is injective for at least one $i$ otherwise $\mathcal{F}$ would be torsion  
which is not possible as the support of $\mathcal{F}$ is equal to $X$. 
Therefore $h^0(\mathcal{O}_X\nrd{mL})\le h^0(\mathcal{F}\nrd{mL})$ 
which implies that  the upper growth of $h^0(\mathcal{F}\nrd{mL})$ is like $m^d$.

(3) $\implies$ (4): Take $\mathcal{F}=\mathcal{O}_X(-A)$ for some sufficiently ample divisor $A$. 
Then  $h^0(\mathcal{F}\nrd{mL})\gg 0$ if $m\gg 0$ hence in particular 
$h^0(\mathcal{O}_X(\sum \rddown{mt_i}A_i-A))\neq 0$ for some $m\gg 0$. 
Thus since $X$ is integral, $\sum \rddown{mt_i}A_i-A\sim D$ for some effective Cartier divisor $D$. 
Therefore 
$$
L\sim_\R \sum (mt_i-\rddown{mt_i})A_i+A+D
$$
which means that $L$ is big.  

(4) $\implies$ (1): By definition, $L\sim_\R A+D$ where $A$ is an ample $\R$-Cartier divisor and 
$D$ is an effective $\R$-Cartier divisor. 
 Replacing $L$ with a large positive multiple and then replacing $L$ with $\nrd{L}$ allows us to 
assume that $L$ is Cartier and $\nrd{mL}=mL$ for each $m>0$. 
By replacing $A$ we can assume that $L\sim_\Q A+D$ and that $A$ is $\Q$-Cartier. 
But then $D$ is also $\Q$-Cartier and 
$$
h^0(\mathcal{O}_X\nrd{mL})=h^0(\mathcal{O}_X(mL))\ge h^0(\mathcal{O}_X(mA))
$$ 
for any sufficiently divisible $m>0$. Arguing as in the proof of Lemma \ref{l-h^0} we can prove that 
the upper growth of $h^0(\mathcal{O}_X(mA))$ is like $m^d$.

(3) $\implies$ (2): Obvious. 

(2) $\implies$ (1):  There is a filtration 
$$
0=\mathcal{F}_0\subset \mathcal{F}_1\subset \cdots \subset \mathcal{F}_n=\mathcal{F}
$$
of coherent sheaves such that for each $0<j\le n$, there exist a closed embedding $f\colon S\to X$ 
of an integral scheme $S$ and an ideal sheaf $\mathcal{J}\subset \mathcal{O}_{S}$ 
such that $\mathcal{F}_j/\mathcal{F}_{j-1}\simeq f_*\mathcal{J}$ (cf. The stacks project [\ref{stacks}],  
section on d\'evissage of coherent sheaves). Let $j$ be the smallest number such that the upper growth 
of $h^0(\mathcal{F}_j\nrd{mL})$ is like $m^d$. 
Let $f\colon S\to X$ and $\mathcal{J}$ be the corresponding embedding and ideal sheaf 
so that $\mathcal{F}_j/\mathcal{F}_{j-1}\simeq f_*\mathcal{J}$.
Then from the exact sequence 
$$
0\to H^0(\mathcal{F}_{j-1}\nrd{mL})\to H^0(\mathcal{F}_j\nrd{mL})\to H^0(\mathcal{J}\nrd{mL})
$$
we deduce that the upper growth of $h^0(\mathcal{J}\nrd{mL})$ is like $m^d$. By Lemma \ref{l-h^0}, 
$\dim S=d$ hence $S=X$. But then the upper growth of $h^0(\mathcal{O}_X\nrd{mL})$ is like $m^d$.\\
 \end{proof}

\begin{prop}\label{l-growth}
Let $X$ be a projective scheme over a field $k$. 
Assume that $L$ is a nef $\R$-Cartier divisor with a given expression $L\sim_\R \sum t_iA_i$ as in \ref{d-abl-2}. 
Let $\mathcal{F}$ be a coherent sheaf on $X$, and let $Y$ be its support and $e$ the dimension of $Y$. 
Then 

$(1)$ the upper growth of $h^0(\mathcal{F}\nrd{mL})$ is at most like $m^{e}$; 

$(2)$ the upper growth of $h^i(\mathcal{F}\nrd{mL})$ is at most like $m^{e-1}$ for any $i>0$; 

$(3)$ the upper growth of $h^0(\mathcal{F}\nrd{mL})$ is like $m^e$ if and only if 
$L|_Z$ is big for some component $Z$ of $Y$ with $\dim Z=e$.
\end{prop}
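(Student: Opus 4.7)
The plan is to prove all three parts together using the dévissage filtration from the proof of Lemma \ref{l-integral-2}:
$$
0=\mathcal{F}_0\subset\mathcal{F}_1\subset\cdots\subset\mathcal{F}_n=\mathcal{F},\qquad \mathcal{F}_j/\mathcal{F}_{j-1}\simeq f_{j*}\mathcal{J}_j,
$$
where $f_j\colon S_j\hookrightarrow X$ is the closed embedding of an integral subscheme $S_j\subseteq Y$ and $\mathcal{J}_j\subseteq\mathcal{O}_{S_j}$ is an ideal sheaf with $\Supp\mathcal{J}_j=S_j$ whenever nonzero. Comparing stalks at generic points shows that every $e$-dimensional component of $Y$ appears as some $S_j$. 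Part (1) then follows at once: subadditivity of $h^0$ on the short exact sequences of the filtration gives $h^0(\mathcal{F}\nrd{mL})\le\sum_j h^0(\mathcal{J}_j\nrd{mL|_{S_j}})$, and Lemma \ref{l-h^0} applied on $S_j$ bounds each summand by at most like $m^{\dim S_j}\le m^e$; nefness plays no role here.

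For part (2) the plan is to induct on $e$; the base case $e=0$ is trivial. The filtration together with the induction for $\dim S_j<e$ reduces the inductive step to the case $\mathcal{F}=f_*\mathcal{J}$ with $S$ integral of dimension $e$ and $\Supp\mathcal{J}=S$, after which we work on $S$. The key idea is that for $B:=\sum_i A_i|_S$ (an ample Cartier divisor on $S$),
$$
\nrd{mL|_S}+B\ \sim_\R\ mL|_S+\sum_i(1-\{mt_i\})\,A_i|_S,
$$
which is ample $\R$-Cartier — hence nef — because $L|_S$ is nef, each $A_i|_S$ is very ample, and $1-\{mt_i\}\in(0,1]$; thus $\nrd{mL|_S}+B$ is a \emph{nef Cartier} divisor. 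Fujita vanishing (\ref{ss-Fujita-v}) applied to $\mathcal{J}$ and the ample divisor $B$ on $S$ then yields $m_0$, independent of $m$, with
$$
h^i\bigl(\mathcal{J}\bigl((m_0+1)B+\nrd{mL|_S}\bigr)\bigr)=0
$$
for all $i>0$ and all $m$. After passing to an infinite ground field (\ref{ss-b-change}) and choosing $D\sim(m_0+1)B$ avoiding the associated points of $\mathcal{J}$ (\ref{ss-restriction}), the long exact sequence of
$$
0\to\mathcal{J}\nrd{mL|_S}\to\mathcal{J}(\nrd{mL|_S}+D)\to(\mathcal{J}\nrd{mL|_S})|_D\to 0
$$
gives $h^i(\mathcal{J}\nrd{mL|_S})\le h^{i-1}(\mathcal{J}|_D\nrd{mL|_D})$ for $i>0$; since $\Supp\mathcal{J}|_D$ has dimension $\le e-1$, part (1) (for $i-1=0$) and the inductive hypothesis (for $i>1$) each bound the right-hand side by at most like $m^{e-1}$. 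The main obstacle here is precisely the uniform nef-Cartier property of $\nrd{mL|_S}+B$: this is what converts the nef hypothesis on $L$ into a single $m_0$ working for \emph{all} $m$, without which the hyperplane-section induction would not produce the sharp $m^{e-1}$ bound.

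For part (3), the direction $\Leftarrow$ picks a top-dimensional component $Z$ of $Y$ with $L|_Z$ big; by the opening remark $Z=S_j$ for some $j$, and Lemma \ref{l-integral-2} makes $h^0(\mathcal{J}_j\nrd{mL|_Z})$ grow like $m^e$. The long exact sequence of $0\to\mathcal{F}_{j-1}\to\mathcal{F}_j\to f_{j*}\mathcal{J}_j\to 0$, combined with the bound $h^1(\mathcal{F}_{j-1}\nrd{mL})=O(m^{e-1})$ coming from part (2), forces $h^0(\mathcal{F}_j\nrd{mL})$ to grow like $m^e$, and then the inclusion $\mathcal{F}_j\hookrightarrow\mathcal{F}$ together with the upper bound of part (1) gives the same growth for $h^0(\mathcal{F}\nrd{mL})$. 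For $\Rightarrow$, subadditivity of $h^0$ along the filtration forces some $h^0(\mathcal{J}_j\nrd{mL|_{S_j}})$ to grow like $m^e$; Lemma \ref{l-h^0} then compels $\dim S_j=e$, so $S_j$ is a top-dimensional component of $Y$, and Lemma \ref{l-integral-2} concludes that $L|_{S_j}$ is big.
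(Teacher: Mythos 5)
Your proof is correct and follows essentially the same route as the paper's: Fujita vanishing made uniform in $m$ by observing that $\nrd{mL}$ plus a fixed ample divisor is nef Cartier, restriction to a hyperplane section avoiding the associated points together with induction on the dimension of the support for (2), and the d\'evissage filtration combined with Lemmas \ref{l-h^0} and \ref{l-integral-2} for (3). The only cosmetic differences are that you route (1) and the reduction in (2) through the filtration rather than applying the hyperplane-section induction to $\mathcal{F}$ on $X$ directly, and your Fujita application has a harmless off-by-one, since $(m_0+1)B+\nrd{mL|_S}=m_0B+(B+\nrd{mL|_S})$ puts coefficient exactly $m_0$ on $B$ whereas \ref{ss-Fujita-v} requires $>m_0$; use $(m_0+2)B$ instead.
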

\begin{proof}
We can assume that the theorem holds for coherent sheaves with support of 
dimension $<e$. We also may assume that the theorem holds for any closed 
subscheme of $X$ other than $X$ itself.

(1) 
This follows from Lemma \ref{l-h^0}.

(2) 
By extending $k$ we can assume that it is infinite.
Choose an effective sufficiently ample divisor $A$. For each $m>0$ we can write 
$\nrd{mL}+A=A'+A''$ where $A'$ is ample Cartier and $A''$ is sufficiently ample Cartier. 
Then by Fujita vanishing (\ref{ss-Fujita-v}) 
we get $h^i(\mathcal{F}(\nrd{mL}+A))=0$ 
for every $m>0$ and $i>0$.
By \ref{ss-restriction}, we can choose $A$ so that the sequence 
$$
0\to \mathcal{F}\to \mathcal{F}(A)\to \mathcal{F}\otimes\mathcal{O}_A(A)\to 0
$$
is exact. The dimension of the support of $\mathcal{F}\otimes\mathcal{O}_A(A)$ 
is $e-1$. Now using the exact sequence 
$$
 H^{i-1}(\mathcal{F}\otimes\mathcal{O}_A(\nrd{mL}+A)) \to H^i(\mathcal{F}(\nrd{mL}))\to H^i(\mathcal{F}(\nrd{mL}+A))=0
$$
and induction on $e$ we get the result.

(3) 
As in the proof of Lemma \ref{l-integral-2}, there is a filtration 
$$
0=\mathcal{F}_0\subset \mathcal{F}_1\subset \cdots \subset \mathcal{F}_n=\mathcal{F}
$$
of coherent sheaves such that for each $0<j\le n$, there exist a closed embedding $f\colon S\to X$ 
of an integral scheme $S$ and an ideal sheaf $\mathcal{J}\subset \mathcal{O}_{S}$ 
such that $\mathcal{F}_j/\mathcal{F}_{j-1}\simeq f_*\mathcal{J}$.

 Assume that the upper growth of $h^0(\mathcal{F}\nrd{mL})$ is like $m^{e}$. 
Let $j$ be minimal with the property that the upper growth of $h^0(\mathcal{F}_{j}\nrd{mL})$ is like $m^{e}$. 
Let $f\colon S\to X$ and $\mathcal{J}$ be the corresponding embedding and ideal sheaf 
so that $\mathcal{F}_j/\mathcal{F}_{j-1}\simeq f_*\mathcal{J}$.
Then the upper growth of $h^0(\mathcal{J}\nrd{mL})$ is like $m^e$. So in particular 
$\mathcal{J}\neq 0$ and since $S$ is integral the support of $\mathcal{J}$ is equal to $S$. 
Moreover, since the upper growth of $h^0(\mathcal{J}\nrd{mL})$ is like $m^e$, Lemma 
\ref{l-h^0} shows that $\dim S\ge e$. On the other hand, $S$ is a subset of 
$Y$ because $\mathcal{F}|_{X\setminus Y}=0$ and because of the surjection $\mathcal{F}_j\to f_*\mathcal{J}$.  
Thus $\dim S\le e$ hence $\dim S=e$.  Now, by Lemma \ref{l-integral-2}, 
$L|_{S}$ is big and so we can take $Z=S$.

Conversely, assume that there is a component $Z$ of $Y$ of dimension $e$ such that 
$L|_Z$ is big. In the filtration above, let $j$ be the smallest number such that $Z$ 
is a component of the support of $\mathcal{F}_j$. Then $Z$ is a subset of the 
support of the corresponding $f_*\mathcal{J}$ hence $Z\subseteq S$ which in turn 
implies that $Z=S$ because $e=\dim Z\le \dim S\le e$. It is then enough to show that the upper growth of
$h^0(\mathcal{J}\nrd{mL})$ is like $m^e$ because of the exact sequence 
$$
0\to H^0(\mathcal{F}_{j-1}\nrd{mL})\to H^0(\mathcal{F}_j\nrd{mL})\to 
H^0(\mathcal{J}\nrd{mL}) \to H^1(\mathcal{F}_{j-1}\nrd{mL})
$$
and the fact that the upper growth of $h^1(\mathcal{F}_{j-1}\nrd{mL})$ is at most like $m^{e-1}$ 
by (2). Now apply Lemma \ref{l-integral-2}.\\
\end{proof}

\section{Proof of the theorems}

\begin{proof}(of Theorem \ref{t-main-1}) 
 By Noetherian induction we can assume that 
the theorem already holds for any closed subscheme of $X$ not equal to $X$.\\

\emph{Step 1.}
We deal with the first equality in the theorem. By definition, ${\bf{B_+}}(L)\subseteq {\bf{B}}(\nrd{nL}-A)$
 for any $n>0$. Moreover, there are positive integers $m_1,\dots,m_r$ such that 
$$
{\bf{B_+}}(L)={\bf{B}}(\nrd{m_1L}-A)\cap \cdots \cap {\bf{B}}(\nrd{m_rL}-A)
$$
If $n=lm_i$ for some positive integer $l$, then $\nrd{nL}-l\nrd{m_iL}$ is zero or ample hence 
$$
{\bf{B}}(\nrd{nL}-A)\subseteq {\bf{B}}(l\nrd{m_iL}-A)\subseteq {\bf{B}}(l\nrd{m_iL}-lA)= {\bf{B}}(\nrd{m_iL}-A)
$$  
Therefore ${\bf{B_+}}(L)={\bf{B}}(\nrd{nL}-A)$ if each $m_i|n$. 

For the second equality: for any fixed $n'>0$ divisible by all the $m_i$ and any sufficiently divisible $l>0$ we have  
$$
{\bf{B_+}}(L)={\bf{B}}(\nrd{n'L}-A)={{\Bs}}|l\nrd{n'L}-lA|\supseteq {{\Bs}}|\nrd{ln'L}-A|
\supseteq {\bf{B}}(\nrd{ln'L}-A)={\bf{B_+}}(L)
$$
Now take $n=ln'$.\\

\emph{Step 2.}
 The rest of the proof will be devoted to showing 
${\bf{B_+}}(L)=\mathbb{E}(L)$. It is obvious that ${\bf{B_+}}(L)\supseteq\mathbb{E}(L)$
so we will focus on the reverse inclusion.
If $L|_Z$ is not big for every component $Z$ of $X$ (with the reduced induced 
structure), then ${\bf{B_+}}(L)\subseteq\mathbb{E}(L)=X$.
So we may assume that there is a component $Z$ such that $L|_Z$ is big. 
Pick such a $Z$ with maximal dimension, say $e$. Let $Y$ be the union of the other components, again 
with the induced reduced structure.  

There are ideal sheaves $\mathcal{I},\mathcal{J}\subset \mathcal{O}_X$ such that the support of 
$\mathcal{I}$ is inside $Z$ but the support of $\mathcal{O}_X/\mathcal{I}$ is inside $Y$, 
and the support of $\mathcal{J}$ is inside $Y$ but the support of $\mathcal{O}_X/\mathcal{J}$ is inside $Z$ 
(cf. [\ref{stacks}], section on d\'evissage of coherent sheaves). 
Let $Y',Z'$ be the closed subschemes defined by $\mathcal{I},\mathcal{J}$ respectively. 
On $Z\setminus Y$ we have $\mathcal{J}=0$ and $\mathcal{O}_{Z'}=\mathcal{O}_X$. Thus 
the reduced scheme associated to $Z'$ is nothing but $Z$. Similarly, one shows that 
the reduced scheme associated to $Y'$ is $Y$.  By construction, on $Z\setminus Y$ we have 
 $\mathcal{O}_{Z'}=\mathcal{O}_X$ and $\mathcal{I}=\mathcal{O}_X$, and on $Y\setminus Z$ we 
 have $\mathcal{I}=0$.\\

\emph{Step 3.}
We would like to find sections of $\mathcal{O}_X(\nrd{nL}-A)$ which vanish on $Y'$ but not on $Z'$.
Let $\mathcal{I}\to \mathcal{O}_{Z'}$ be the composition 
$\mathcal{I}\hookrightarrow \mathcal{O}_{X} \to \mathcal{O}_{Z'}$
and let $\mathcal{K},\mathcal{L}$ be its kernel and image respectively. 
 Similarly, let $\mathcal{L}\to \mathcal{O}_{Z}$ be the composition
$\mathcal{L}\hookrightarrow \mathcal{O}_{Z'}\to \mathcal{O}_Z$  
and  let $\mathcal{N},\mathcal{M}$ be its kernel and image respectively.  
Then by Step 2 on $Z\setminus Y$ we have  $\mathcal{L}=\mathcal{O}_{Z'}$ and $\mathcal{M}=\mathcal{O}_Z$, 
and on $Y\setminus Z$ we have $\mathcal{L}=\mathcal{M}=0$. 
Therefore the support of 
$\mathcal{L},\mathcal{M}, \mathcal{I}$ are all equal to $Z$, and the support of $\mathcal{K},\mathcal{N}$ 
are subsets of $Z$. 

Now we have the exact sequences 
$$
0\to \mathcal{K}(\nrd{nL}-A)\to \mathcal{I}(\nrd{nL}-A)\to \mathcal{L}(\nrd{nL}-A)\to 0
$$
and 
$$
0\to \mathcal{N}(\nrd{nL}-A)\to \mathcal{L}(\nrd{nL}-A)\to \mathcal{M}(\nrd{nL}-A)\to 0
$$
By Proposition \ref{l-growth}, the upper growth of $h^0(\mathcal{M}(\nrd{nL}-A))$ is like $n^e$ but 
the upper growth of $h^1(\mathcal{N}(\nrd{nL}-A))$ is at most like $n^{e-1}$. 
On the other hand, again by Proposition \ref{l-growth}, the upper growth of 
$h^0(\mathcal{L}(\nrd{nL}-A))$ is like $n^e$ but the upper growth of
$h^1(\mathcal{K}(\nrd{nL}-A))$ is at most like $n^{e-1}$. Therefore 
for infinitely many $n>0$ we can lift a nonzero section of $\mathcal{M}(\nrd{nL}-A)$ to a section of 
$\mathcal{L}(\nrd{nL}-A)$ and in turn to a section of $\mathcal{I}(\nrd{nL}-A)$. 
In other words, there is a section $\alpha\in H^0(\mathcal{I}(\nrd{nL}-A))$ whose restriction to 
$Z$ is nonzero. Since $\mathcal{I}$ is the ideal sheaf of $Y'$, 
$\alpha$ vanishes on $Y'$ when considered as a section of $\mathcal{O}_X(\nrd{nL}-A)$ 
via the injection $\mathcal{I}(\nrd{nL}-A)\to \mathcal{O}_X(\nrd{nL}-A)$.\\

\emph{Step 4.}
From now on we consider $\alpha$ as a section of $\mathcal{O}_X(\nrd{nL}-A)$. 
We can think of $\alpha$ as a morphism $\mathcal{O}_X\to \mathcal{O}_X(\nrd{nL}-A)$ 
such that if we tensor this with $\mathcal{O}_Z$ then we obtain a nonzero morphism. 
Let $\alpha_1:=\alpha$ and let $\mathcal{T}_1$ be the kernel of $\alpha_1$. 
Let $\alpha_2$ be the composition 
$$
\mathcal{O}_X\to \mathcal{O}_X(\nrd{nL}-A)\to \mathcal{O}_X(2\nrd{nL}-2A)\to \mathcal{O}_X(\nrd{2nL}-2A)
$$
where the first morphism is $\alpha_1$, the second one is obtained by 
tensoring $\alpha_1$ with $\mathcal{O}_X(\nrd{nL}-A)$, and the third one 
comes from the choice of an injective morphism 
$\mathcal{O}_X\to \mathcal{O}_X(\nrd{2nL}-2\nrd{nL})$ (which exists because $\nrd{2nL}-2\nrd{nL}$ 
is zero or very ample) and tensoring it with $\mathcal{O}_X(2\nrd{nL}-2A)$. 

Let $\mathcal{T}_2$ be the 
kernel of $\alpha_2$. Obviously, $\mathcal{T}_1\subseteq \mathcal{T}_2$. 
Inductively we can define $\alpha_i$ to be the composition 
$$
\mathcal{O}_X\to \mathcal{O}_X(\nrd{({i-1})nL}-({i-1})A)\to \mathcal{O}_X(\nrd{({i-1})nL}+\nrd{nL}-{i}A) 
\to \mathcal{O}_X(\nrd{{i}nL}-{i}A)
$$ 
where the first map is $\alpha_{i-1}$, the second map is obtained by 
tensoring $\alpha_1$ with $\mathcal{O}_X(\nrd{({i-1})nL}-({i-1})A)$, and third one is obtained from the 
choice of an injective morphism $\mathcal{O}_X\to \mathcal{O}_X(\nrd{{i}nL}-\nrd{({i-1})nL}-\nrd{nL})$. 
Again it is obvious that 
$\mathcal{T}_{i-1}\subseteq \mathcal{T}_i$.\\  

\emph{Step 5.}
By the Noetherian property, there is $r$ such that $\mathcal{T}_r=\mathcal{T}_{r+1}=\cdots$.
Since $\alpha_1$ restricted to $Z$ is nonzero and since $Z$ is integral, we can make sure that the restriction of 
each $\alpha_i$ to $Z$ is also nonzero. Indeed if $U\subset Z$ is a small nonempty open set, 
then the restriction to $U$ of each map in the definition of $\alpha_i$ is an isomorphism.
Therefore each $\alpha_i$ is nonzero  hence $\mathcal{T}_r\subsetneq \mathcal{O}_X$. 

Now tensor $\alpha_r$ with $\mathcal{O}_X(-\nrd{rnL}+rA)$ and let $\mathcal{E}$ be its image 
in $\mathcal{O}_X$. Then we get the exact sequence 
$$
0\to \mathcal{T}_r(-\nrd{rnL}+rA)\to \mathcal{O}_X(-\nrd{rnL}+rA)\to \mathcal{E}\to 0
$$
Let $E$ be the closed subscheme defined by $\mathcal{E}$, that is, $E$ 
is the zero subscheme of $\alpha_r$. Note that $E\neq X$ otherwise $\mathcal{E}=0$ 
hence $\alpha_r=0$ which is not possible.

We will argue that ${\bf{B}}(\nrd{mL}-A)\subseteq E$ if $m>0$ is sufficiently 
divisible. If $E=X$ topologically, then the claim is trivial. So we may assume that $E\neq X$ 
topologically. By construction, $\alpha_r$ does not vanish outside $E$ hence 
${\bf{B}}(\nrd{rnL}-rA)\subseteq E$. If $m=lrn$, then 
$$
{\bf{B}}(\nrd{mL}-A)\subseteq {\bf{B}}(l\nrd{rnL}-A)\subseteq {\bf{B}}(l\nrd{rnL}-lA)
={\bf{B}}(\nrd{rnL}-A)\subseteq {\bf{B}}(\nrd{rnL}-rA)\subseteq E
$$\

\emph{Step 6.}
We will assume that $r\gg 0$. Consider the exact sequence 
$$
0\to \mathcal{T}_r(\nrd{mL}-\nrd{rnL}+rA-aA)\to \mathcal{O}_X(\nrd{mL}-\nrd{rnL}+rA-aA)\to \mathcal{E}(\nrd{mL}-aA)\to 0
$$
Since $\mathcal{T}_r$ does not depend on $r\gg 0$, by Fujita vanishing, we may assume that 
$$
H^i(\mathcal{T}_r(\nrd{mL}-\nrd{rnL}+rA-aA))=0
$$
and 
$$
H^i(\mathcal{O}_X(\nrd{mL}-\nrd{rnL}+rA-aA))=0
$$
for any $i>0$, $m>rn$, and $0\le a\le 1$. Therefore $H^i(\mathcal{E}(\nrd{mL}-aA))=0$ 
if  $i>0$, $m\gg 0$, and $0\le a\le 1$ (in this proof we only need to consider $a=1$ 
but in the proof of Theorem \ref{t-main-2} we need to take $a=0$).

On the other hand, we have the exact sequence 
$$
0\to \mathcal{E}(\nrd{mL}-aA)\to \mathcal{O}_X(\nrd{mL}-aA)\to \mathcal{O}_E(\nrd{mL}-aA)\to 0
$$
from which we obtain the exact sequence 
$$
H^0(\mathcal{O}_X(\nrd{mL}-aA))\to H^0(\mathcal{O}_E(\nrd{mL}-aA))\to H^1(\mathcal{E}(\nrd{mL}-aA))=0
$$
if $m\gg 0$ and $0\le a\le 1$.\\ 

\emph{Step 7.}
From the expression $L\sim_\R \sum t_iA_i$ we obtain the expression $L|_E\sim_\R \sum t_iA_i|_E$. 
For each $m>0$ we get $\nrd{mL}|_E=\nrd{mL|_E}$. 
Taking $a=1$ in Step 6, recalling that 
${\bf{B}}(\nrd{mL}-A)\subseteq E$ if $m>0$ is sufficiently divisible, and using Step 1, we deduce that 
$$
{\bf{B}_+}(L)={\bf{B}}(\nrd{mL}-A)=\Bs|\nrd{mL}-A|
$$
$$
=\Bs|\nrd{mL|_E}-A|_E|
={\bf{B}}(\nrd{mL|_E}-A|_E)={\bf{B}_+}(L|_E)
$$
for any sufficiently divisible $m>0$.

On the other hand, it is easy to see that $\mathbb{E}(L|_E)\subseteq \mathbb{E}(L)$.
Indeed, let $V$ be a component of $\mathbb{E}(L|_E)$. Then $(L|_E)|_V$ is not big so $L|_V$ is not big 
hence $V\subseteq \mathbb{E}(L)$. Finally using the Noetherian induction and the above results we get 
$$
\mathbb{E}(L)\subseteq {\bf{B}}_+(L)={\bf{B}}_+(L|_E)=\mathbb{E}(L|_E)\subseteq \mathbb{E}(L) 
$$
which in particular implies that ${\bf{B}}_+(L)=\mathbb{E}(L)$.\\
\end{proof}

\begin{proof}(of Theorem \ref{t-main-2})
We may assume that the theorem holds for closed subscheme of $X$ other than $X$ itself. 
Moreover, by replacing $L$ with a multiple we can assume that it is Cartier.
If $\mathbb{E}(L)=X$, the theorem is trivial. So we assume this is not the case.
Let $E$ be the subscheme constructed in Step 5 of the proof of Theorem \ref{t-main-1}. 
We showed that if $m\gg 0$, the map 
$$
H^0(\mathcal{O}_X(mL))\to H^0(\mathcal{O}_E(mL))
$$
is surjective (by taking $a=0$). Moreover, we showed that $\mathbb{E}(L|_E)=\mathbb{E}(L)$. 
Since 
$$
{\bf{B}}(L)\subset {\bf{B}_+}(L)=\mathbb{E}(L)\subseteq E
$$
we have ${\bf{B}}(L)={\bf{B}}(L|_E)$. Thus $L$ is semi-ample if and only if $L|_E$ is 
semi-ample. Since the theorem already holds for 
$E$ by assumptions, there is a closed subscheme $Z$ of $E$ whose reduction is  
$\mathbb{E}(L|_E)$ and such that $L|_E$ is semi-ample if and only if $L|_Z$
is semi-ample. Now $L$ is semi-ample if and only if $L|_Z$ is semi-ample.\\
\end{proof}

\begin{proof}(of Theorem \ref{t-main-3})
Assume that $L^d>0$. Let $A$ be a sufficiently ample effective Cartier divisor.
Let $L\sim_\R \sum t_iA_i$ be an expression as in \ref{d-abl-2}. 
First we show that the upper growth of $h^0(\mathcal{O}_X(\nrd{mL}+A))$ is like $m^d$. 
By Fujita vanishing (\ref{ss-Fujita-v}), $h^i(\mathcal{O}_X(\nrd{mL}+A))=0$ for any $i>0$ and $m>0$. 
Thus 
$$
\mathcal{X}(\mathcal{O}_X(\nrd{mL}+A))=h^0(\mathcal{O}_X(\nrd{mL}+A))
$$
On the other hand, by the asymptotic Riemann-Roch theorem the upper growth of $\mathcal{X}(\mathcal{O}_X(\nrd{mL}+A))$
is like the upper growth of $(\nrd{mL}+A)^d$. For each $m>0$, there is an ample $\R$-Cartier divisor 
$G_m$ such that $\nrd{mL}+A=mL+G_m$ hence $(\nrd{mL}+A)^d>m^dL^d$ which shows that the upper 
growth of $(\nrd{mL}+A)^d$ is like $m^d$. Therefore the upper growth of 
$h^0(\mathcal{O}_X(\nrd{mL}+A))$ is like $m^d$.

Now consider the exact sequence 
$$
0\to \mathcal{O}_X(\nrd{mL}-A)\to \mathcal{O}_X(\nrd{mL}+A)\to \mathcal{O}_{2A}(\nrd{mL}+A)\to 0
$$ 
By Lemma \ref{l-h^0}, the upper growth of $h^0(\mathcal{O}_{2A}(\nrd{mM}+A))$ is at most like 
$m^{d-1}$ hence the upper growth of $h^0(\mathcal{O}_X(\nrd{mL}-A))$ is like $m^d$. 
Since $X$ is integral, $\nrd{mL}-A\sim D$ for some effective Cartier divisor $D$. Therefore 
$mL\sim_\R A'+D$ for some ample $\R$-Cartier divisor $A'$ hence $L$ is big.

Conversely assume that $L$ is big. So by definition $L\sim_\R A'+D$ for some ample $\R$-Cartier divisor $A'$ 
and effective $\R$-Cartier divisor $D$. By replacing $L,A',D$ appropriately we can assume that 
$A'$ is a general very ample Cartier divisor. Let $S$ be an irreducible component of $A'$ with the 
induced reduced structure. Since $\dim S=d-1$ and since $L|_S$ is big, by induction on dimension, 
$(L|_S)^{d-1}>0$ hence $L^{d-1}\cdot S>0$ which implies that $L^d=L^{d-1}\cdot A'+L^{d-1}\cdot D>0$.\\
\end{proof}


\section{The augmented base locus of log divisors}

Assume that $X$ is a normal projective variety of dimension $d$ over an algebraically 
closed field $k$, and that $B,A\ge 0$ are $\R$-divisors. 
Moreover, suppose $A$ is nef and big and $L=K_X+B+A$ is nef. 

\begin{thm}
Assume $L^d=0$. Then  ${\bf{B}}_+(L)=X$ is covered by rational curves $C$ with $L\cdot C=0$. 
\end{thm}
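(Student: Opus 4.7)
The equality $\mathbf{B}_+(L)=X$ is immediate from the theorems already proved. Since $L$ is nef with $L^d=0$, Theorem \ref{t-main-3} rules out $L$ being big; taking $V=X$ in the definition of the exceptional locus shows $X\subseteq\mathbb{E}(L)$, so $\mathbb{E}(L)=X$, and Theorem \ref{t-main-1} gives $\mathbf{B}_+(L)=\mathbb{E}(L)=X$.

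The covering statement is the substantive part. The first observation is that $K_X+B$ cannot be nef: if it were, then expanding $L^d=((K_X+B)+A)^d$ via the binomial formula and using nefness of both summands together with $A^d>0$ (Theorem \ref{t-main-3} applied to the nef big divisor $A$) would give $L^d\geq A^d>0$, contradicting the hypothesis. Hence there exist $(K_X+B)$-negative curves, and any curve $C$ with $L\cdot C=0$ which is not contained in $\Supp E$---where $A\sim_\R H+E$ with $H$ ample and $E\geq 0$---automatically satisfies $A\cdot C>0$ and hence $(K_X+B)\cdot C<0$.

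My plan is then to produce, through every point of $X$, a curve $C$ with $L\cdot C=0$, and subsequently to replace $C$ by a rational curve. For the first task I would exploit $\mathbb{E}(L)=X$ together with an iterated descent: each point $x\in X$ lies on some positive-dimensional integral $V\subseteq X$ with $L|_V$ not big, and Theorems \ref{t-main-3} and \ref{t-main-1} apply again on $V$ to give $\mathbb{E}(L|_V)=V$, eventually yielding an integral curve $C\ni x$ with $L\cdot C=0$ (for general $x\notin\Supp E$, this $C$ automatically avoids $\Supp E$). For the second task I would apply Mori-style bend-and-break: since $C$ is $(K_X+B)$-negative, deforming $C$ through a fixed point in a family degenerates it into a cycle containing a rational component $C'$, and nefness of $L$ combined with $L\cdot C=0$ along the family forces $L\cdot C'=0$. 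The main obstacle is making both steps rigorous in the normal, arbitrary-characteristic setting without klt or smoothness hypotheses on $(X,B)$---the descent step requires care since $V$ itself lies a priori in $\mathbb{E}(L|_V)$ so one must explicitly find proper subvarieties through $x$, and bend-and-break typically demands smoothness or resolution techniques together with an appropriate form of the log cone theorem.
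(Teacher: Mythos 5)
Your first paragraph is fine: $L$ nef with $L^d=0$ is not big by Theorem \ref{t-main-3}, so $X$ itself witnesses $X\subseteq\mathbb{E}(L)$, and Theorem \ref{t-main-1} gives ${\bf{B}}_+(L)=\mathbb{E}(L)=X$. (Note that the paper does not prove this theorem at all; it is quoted from [\ref{CTX}] and [\ref{B-mmodel-char-p}], so there is no internal proof to compare with.) The observation that $K_X+B$ is not nef, and that an $L$-trivial curve not inside $\Supp E$ is $(K_X+B)$-negative, is also correct.

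The covering argument, however, has a fatal gap in its first step. Your ``descent'' is empty: at every stage the witness for $\mathbb{E}(L|_V)=V$ is $V$ itself, so the definition of the exceptional locus never hands you a proper subvariety through a given point, and the iteration never reaches a curve. This is not a repairable bookkeeping issue --- no argument using only ``$L$ nef and not big'' can produce $L$-trivial curves, because such curves need not exist: there are smooth projective surfaces carrying a nef divisor $L$ with $L^2=0$ and $L\cdot C>0$ for \emph{every} curve $C$ (Mumford's example on a ruled surface over a genus-two curve). Hence the existence of $L$-trivial curves through every point must come from the adjoint structure $L=K_X+B+A$, i.e.\ from bend-and-break or a cone theorem, which is precisely the step you defer without proof. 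Your second step is also logically inverted: bend-and-break is the mechanism that \emph{produces} rational curves, not a device for replacing an already-found $L$-trivial curve by a rational one, and the inequality $(K_X+B)\cdot C<0$ alone is far from sufficient to deform $C$ through a fixed point (one needs $-K_X\cdot C$ large compared with the genus of $C$, which in positive characteristic is arranged by composing with Frobenius and in characteristic zero by working with covering families on a resolution). As it stands the proposal reduces the theorem to an unproven statement that is essentially equivalent to it.
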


The theorem was proved by  Cascini-Tanaka-Xu [\ref{CTX}] and independently 
by M$^{\rm c}$Kernan, when $k$ has positive characteristic.
A short proof of this in any characteristic was given in [\ref{B-mmodel-char-p}].
Now if $L^d>0$, what can we say about ${\bf{B}}_+(L)$? For example, is it 
again covered by rational curves intersecting $L$ trivially? We give a couple of examples to 
shed some light on this question.

\begin{exa}
Let $E$ be an elliptic curve over an algebraically closed field $k$ and let 
$X=\PP(\mathcal{O}_E\oplus \mathcal{O}_E(1))$. The surjection 
$\mathcal{O}_E\oplus \mathcal{O}_E(1)\to \mathcal{O}_E$ defines a section of the 
projection $X\to E$ whose image will be denoted by $E$ again. 
Moreover, there is a birational contraction $X\to Z$ which contracts only $E$. 
Let $B=E$ and $A$ be the pullback of a sufficiently ample divisor on $Z$. 
Let $L=K_X+B+A$. By construction, ${\bf{B}}_+(L)=\mathbb{E}(L)=E$ which is not covered by rational 
curves but at least it is covered by curves intersecting $L$ trivially. 
\end{exa}

\begin{exa}
There is a projective surface $S$ over an algebraically closed field $k$ so that 
there is a nef prime divisor $M$ such that $\kappa(M)=0$, $K_S+2M\sim 0$,  
and if $M\cdot C=0$ for any curve $C$ then $C=M$ 
(see Shokurov [\ref{Sh-complements}, Example 1.1] for such an example).
Let $X=\PP(\mathcal{O}_S\oplus \mathcal{O}_S(1))$. The surjection 
$\mathcal{O}_S\oplus \mathcal{O}_S(1)\to \mathcal{O}_S$ defines a section of the 
projection $\pi\colon X\to S$ whose image will be denoted by $S$ again. 
Moreover, there is a birational contraction $X\to Z$ which contracts only $S$ to a point. 
Let $B=S+3\pi^*M$ and let $A$ be the pullback of a sufficiently ample divisor on $Z$. 
Let $L=K_X+B+A$. Then ${\bf{B}}_+(L)=\mathbb{E}(L)\subseteq S$. Moreover, since 
$$
L|_S=(K_X+S+3\pi^*M+A)|_S\sim K_S+3M\sim M 
$$
is not big, ${\bf{B}}_+(L)=\mathbb{E}(L)= S$. But there is no family of curves $C$ covering $S$ 
with the property $L\cdot C=0$. 
\end{exa}

These examples show that we need to put some reasonably strong condition on $X,B,A$ 
to be able to say something interesting about ${\bf{B}}_+(L)$.

\begin{quest}
Assume that $(X,B)$ is a projective klt pair over an algebraically closed 
field $k$ and $A$ a nef and big $\R$-divisor.  
Assume that $L=K_X+B+A$ is nef and that $L^d>0$. 
Is it true that  ${\bf{B}}_+(L)$ is covered by rational curves $C$ with $L\cdot C=0$?
\end{quest} 

Assume that $k=\C$. Then $L$ in the question is semi-ample by the base point free theorem 
hence it defines a contraction $X\to Y$. Moreover, it is well-known that the fibres of $X\to Y$ are covered 
by rational curves. 
Note that ${\bf{B}}_+(L)$ is nothing but the union of the fibres of $X\to Y$. 
 
Now assume that $k$ has characteristic $p>5$ and $\dim X\le 3$. 
One can show that $L$ is again semi-ample (if $\dim X=2$, this holds for any $p$ [\ref{Tanaka}]).
We sketch the proof.
Since $A$ is nef and big, we can change the situation so that it is ample [\ref{B-mmodel-char-p}, Lemma 8.2].
Using boundedness of the length of extremal rays [\ref{Keel}][\ref{B-mmodel-char-p}, 3.3]
one can show that $L=\sum r_i(K_X+B_i+A_i)$ where $r_i>0$, $\sum r_i=1$, $B_i,A_i$ are effective  
$\Q$-divisors, $A_i$ is ample, $(X,B_i)$ is klt, and $K_X+B_i+A_i$ is nef and big. 
Now applying [\ref{B-mmodel-char-p}, Theorem 1.4][\ref{Xu}] each $K_X+B_i+A_i$ is semi-ample 
hence $L$ is also semi-ample. Thus $L$ defines a contraction $X\to Y$. In particular, ${\bf{B}}_+(L)$ is covered by a 
family of curves intersecting $L$ trivially. Using the results of [\ref{B-mmodel-char-p}] it does not seem
hard to prove that the fibres of $X\to Y$ are actually covered by rational curves.

Assume that $k$ has positive characteristic and $\dim X\ge 4$. It seems hard to answer the question in this 
case because of the lack of resolution of singularities. However, if replace the klt condition 
with strongly $F$-regular, then it is likely that one can actually answer the question.


\vspace{2cm}

\flushleft{DPMMS}, Centre for Mathematical Sciences,\\
Cambridge University,\\
Wilberforce Road,\\
Cambridge, CB3 0WB,\\
UK\\
email: c.birkar@dpmms.cam.ac.uk\\

\end{document}